\newtheorem{theor}{Theorem}
\theoremstyle{definition}
\newtheorem{convention}{Convention}
\newtheorem{state}[theor]{Proposition}
\newtheorem{proposition}[theor]{Proposition}
\newtheorem{lemma}[theor]{Lemma}
\newtheorem{cor}[theor]{Corollary}
\newtheorem{define}{Definition}
\newtheorem{example}{Example}
\theoremstyle{remark}
\newtheorem{rem}{Remark}
\newcommand{\pinner}{\mathbin{\mathchoice
{\hbox{\vrule width0.6em depth0pt height0.4pt
	\vrule width0.4pt depth0pt height0.8ex}}
{\hbox{\vrule width0.6em depth0pt height0.4pt
	\vrule width0.4pt depth0pt height0.8ex}}
{\hbox{\kern0.14em
	\vrule width0.48em depth0pt height0.4pt
	\vrule width0.4pt depth0pt height0.6ex\kern0.14em}}
{\hbox{\kern0.1em
	\vrule width0.39em depth0pt height0.4pt
	\vrule width0.4pt depth0pt height0.5ex\kern0.1em}}}}
\let \wt=\widetilde
\newcommand{\BBR}{\mathbb{R}}\newcommand{\BBC}{\mathbb{C}}
\newcommand{\BBS}{\mathbb{S}}
\newcommand{\BBZ}{\mathbb{Z}}
\newcommand{\EuA}{{{\EuScript A}}}
\newcommand{\cA}{{{\EuScript A}}}
\newcommand{\bcA}{\boldsymbol{\mathcal{A}}}
\newcommand{\bcP}{{\boldsymbol{\mathcal{P}}}}
\newcommand{\cE}{\mathcal{E}}
\newcommand{\cL}{\mathcal{L}}
\newcommand{\cP}{\mathcal{P}}
\newcommand{\bs}{{\boldsymbol{s}}}
\newcommand{\bolds}{{\boldsymbol{s}}}
\newcommand{\bu}{{\boldsymbol{u}}}
\newcommand{\bv}{{\boldsymbol{v}}}
\newcommand{\bw}{{\boldsymbol{w}}}
\newcommand{\bx}{{\boldsymbol{x}}}
\newcommand{\bby}{{\boldsymbol{y}}}
\newcommand{\bz}{{\boldsymbol{z}}}
\newcommand{\bxi}{{\boldsymbol{\xi}}}
\newcommand{\dd}{\partial}
\newcommand{\Id}{{\mathrm d}}
\newcommand{\bbx}{{\boldsymbol{x}}}
\DeclareMathOperator{\dvol}{d
	vol}
\newcommand{\Sl}{\mathfrak{sl}}
\DeclareMathOperator{\Jac}{Jac}
\DeclareMathOperator{\supp}{supp}
\newcommand{\lshad}{[\![}
\newcommand{\rshad}{]\!]}
\newcommand{\ov}{\overline}
\newcommand{\KdV}{{\text{KdV}}}
\newcommand{\mKdV}{{\text{mKdV}}}
\newcommand{\Liou}{{\text{Liou}}}
\DeclareMathOperator{\Assoc}{Assoc}
\newcommand{\by}[1]{\textit{{#1}}}
\newcommand{\jour}[1]{\textit{{#1}}}
\newcommand{\vol}[1]{\textbf{{#1}}}
\newcommand{\book}[1]{\textrm{{#1}}}
\title[Deformation approach to quantisation of field models]{The deformation quantization mapping\\[2pt] of Poisson\/-\/{} to associative structures\\[2pt] in field theory}
\author[Arthemy Kiselev]{Arthemy V. Kiselev${}^{\text{$*$,$\S$}}$}
\thanks{\textit{Address}: Johann Ber\-nou\-lli Institute for Mathematics and Computer Science, University of Groningen,
P.O.~Box 407, 9700~AK Groningen, The Netherlands. 
\quad\textit{E-mail}: \texttt{A.V.Kiselev\symbol{"40}rug.nl}}
\thanks{${}^*$\:Max Planck Institute for Mathematics, Vi\-vats\-gas\-se~7, \mbox{D-53111} Bonn, Germany}
\thanks{${}^{\S}$\:\textit{Present address}: $\smash{\text{IH\'ES}}$, Le Bois\/--\/Marie, 35~route de Chartres, Bures\/-\/sur\/-\/Yvette, \mbox{F-91440} France}
\date{April 30, 2017}
\subjclass[2010]{
53D55, 
58E30, 
81S10; 
secondary
53D17, 
58Z05, 
70S20.
}
\keywords{Deformation quantization, star\/-\/product, field theory, Poisson bracket, associativity}
\begin{document}
\begin{abstract}
Let $\{{\cdot},{\cdot}\}_{\bcP}$ be a variational Poisson bracket in a field model on an affine bundle~$\pi$ over an affine base manifold~$M^m$. Denote by~$\times$ the commutative associative multiplication in the Poisson algebra~$\bcA
$ of local functionals $\Gamma(\pi)\to\Bbbk$ that take field configurations to numbers.
By applying the techniques from geometry of iterated variations, 
we make well defined the deformation quantization map ${\times}\mapsto{\star}={\times}+\hbar\,\{{\cdot},{\cdot}\}_{\bcP}+\bar{o}(\hbar)$ that produces 
a noncommutative $\Bbbk[[\hbar]]$-\/linear star\/-\/product~$\star$ 
in
~$\bcA
$.
\end{abstract}
\maketitle

\subsection*{Introduction}
Starting from a Poisson bi\/-\/vector $\mathcal{P}$ on a given finite\/-\/dimensional affine Poisson manifold $(N^n,\mathcal{P})$, the Kontsevich graph summation formula~\cite{KontsevichFormality} yields an explicit deformation ${\times}\mapsto{\star_\hbar}$ of the commutative product~$\times$ in the algebra $\cA\mathrel{{:}{=}}C^\infty(N^n)$ of smooth functions. The new operation~$\star_\hbar$ on the space $\cA[[\hbar]]=C^\infty(N)[[\hbar]]$ of power series is specified by the Poisson structure on~$N$: namely, $f\mathop{{\star}_\hbar}g=f\times g+
\hbar\,\{f,g\}_{\mathcal{P}}+\bar{o}(\hbar)$ such that all the bi\/-\/differential terms at higher powers of the formal parameter~$\hbar$ are completely determined by the Poisson bracket~$\{\,,\,\}_{\mathcal{P}}$ in the leading deformation term.
The deformed product~$\star_\hbar$ is no longer commutative if~$\mathcal{P}\neq0$ but it 
stays associative, 
\[
\Assoc_{\star_\hbar}(f,g,h)\mathrel{{:}{=}}
\bigl(f\mathop{{\star}_\hbar}g\bigr)\mathop{{\star}_\hbar}h - f\mathop{{\star}_\hbar}\bigl(g\mathop{{\star}_\hbar}h
\bigr) \doteq 0\qquad \text{for all }f,g,h\in C^\infty(N)[[\hbar]],
\]
by virtue of bi\/-\/vector's property $\Jac(\cP)\mathrel{{:}{=}} [\![\mathcal{P},\mathcal{P}]\!]=0$ to be Poisson, cf.~\cite{sqs15}.

In this paper we extend the Poisson set\/-\/up and graph summation technique in the deformation ${\times}\mapsto{\star_\hbar}$ to the jet\/-\/space (super)\/geometry of $N^n$-\/valued fields $\phi\in\Gamma(\pi)$ over another, $(m>0)$-\/dimensional affine base manifold~$M^m$ in a given 
bundle~$\pi$ and secondly, of variational Poisson bi\/-\/vectors~$\boldsymbol{\mathcal{P}}$ that encode the Poisson brackets~$\{\,,\,\}_{\boldsymbol{\mathcal{P}}}$ on the space of local functionals taking $\Gamma(\pi)\to\Bbbk$. We explain why an extension of the Kontsevich graph technique~\cite{KontsevichFormality}
is possible and how it is done by using the geometry of iterated variations~\cite{gvbv,cycle16}.
For instance, we derive a variational analogue of the Moyal associative $\star$-\/product, $f\mathop{\star}g=(f)\,\exp\bigl(\overleftarrow{\partial_i}\cdot\hbar\mathcal{P}^{ij}\cdot\overrightarrow{\partial_j}\bigr)\,(g)$, in the case where the coefficients~$\mathcal{P}^{ij}$ of bi\/-\/vector~$\mathcal{P}$ are constant (hence the Jacobi identity $[\![\mathcal{P},\mathcal{P}]\!]=0$ holds trivially). 
To process variational Poisson structures with nonconstant coefficients, we analyse (see~\cite{sqs15,cpp}) the factorization mechanism $\Assoc_{\star_\hbar}(f,g,h)=\Diamond\bigl(\cP,\Jac_\cP(\cdot,\cdot\cdot)\bigr)(f,g,h)$ using the Kontsevich graphs at higher powers of the deformation parameter~$\hbar$. We explain why, holding up to~$\bar{o}(\hbar^2)$, the associativity of~$\star_\hbar$ can start leaking at orders~$\hbar^{\geqslant3}$ in the variational Poisson geometry of field\/-\/theoretic models.\\
\centerline{\rule{1in}{0.7pt}}

\noindent%
Its concept going back to Weyl\/--\/Gr\"onewold~\cite{Groenewold} and Moyal~\cite{Moyal}, the problem of associativity\/-\/preserving deformation quantisation ${\times}\mapsto{\star}_\hbar$ of commutative product~$\times$ in the algebras~$C^\infty(N^n)$ of functions on smooth finite\/-\/dimensional symplectic manifolds~$(N^n,\omega)$ was considered by Bayen\/--\/Flato\/--\/Fr\o{}ns\-dal\/--\/Lichnerowicz\/--\/Sternheimer~\cite{BFFLS}. Further progress within the symplectic picture was made by De~Wilde\/--\/Lecomte~\cite{DeWildeLecomte} and independently, Fedosov~\cite{Fedosov}. To tackle the deformation quantisation problem in the case of finite\/-\/dimensional affine Poisson geometries $\bigl(N^n,\{\cdot,\cdot\}_{\cP}\bigr)$ --\,that is, in absence of the Darboux lemma which guarantees the existence of canonical coordinates on a chart $U_\alpha\subseteq N^n$ in the symplectic case\,-- Kontsevich developed the graph complex technique~\cite{MKZurichICM,MKParisECM}; it yields an explicit construction of each term in the 
series ${\times}\mapsto{\star}_\hbar$, see~\cite{KontsevichFormality,Ascona96}.
We recall this approach and analyse some of its features in section~\ref{SecFinite} below.
Specifically, 
the 
sum over a suitable set of weighted oriented graphs 
determines on~$N^n\ni\bu$ a star\/-\/product~${\star}_\hbar$ which (\textit{i}) contains a given Poisson bracket $\{\cdot,\cdot\}_{\cP}$ in the leading deformation term at~$\hbar^1$ and which (\textit{ii}) is associative modulo the Jacobi identity for~$\{\cdot,\cdot\}_{\cP}$,
\begin{equation}\label{EqDefDiamond}
\Assoc_{\star_\hbar}(f,g,h) =
\Diamond\,\bigl(\cP,\Jac_\cP(\cdot,\cdot,\cdot)
\bigr)(f,g,h), \qquad f,g,h\in C^\infty(N^n)[[\hbar]],
\end{equation}
where
\[
\Jac_\cP(a,b,c)= \tfrac{1}{2}\sum_{\sigma\in S_3} (-)^\sigma \{\{\sigma(a),\sigma(b)\}_\cP,\sigma(c)\}_\cP,
\qquad a,b,c\in C^\infty(N^n).
\]
The construction of polydifferential operator~$\Diamond$ has been analysed up to order~$\bar{o}(\hbar^4)$ in~\cite{sqs15,cpp}.

A key distinction between associativity mechanisms for the Darboux\/-\/symplectic and Poisson cases is a possibility of the star\/-\/product self\/-\/action on non\/-\/constant coefficients~$P^{ij}(\bu)$ of the bracket~$\{\cdot,\cdot\}_{\cP}$. It is readily seen that whenever those coefficients are constant, the graph summation formula for~${\star}_\hbar$ then yields the Moyal star\/-\/product~\cite{Moyal},
\begin{equation}\label{EqMoyal}
{\star}
\Bigr|_{\bu=(u^1,\ldots,u^n)}=\exp\Bigl(\frac{\overleftarrow{\dd}}{\dd v^i}\Bigr|_{v^i=u^i}\cdot\hbar P^{ij}(\bu)\cdot \frac{\overrightarrow{\dd}}{\dd w^j}\Bigr|_{w^j=u^j}\Bigr).
\end{equation}
This formula's geometric extension to the infinite\/-\/dimensional space of $N^n$-\/valued 
fields over a given $m$-\/dimensional affine manifold~$M^m$ will be obtained in \S\ref{SecMoyal}, see Eq.~\eqref{EqVarMoyal} on p.~\pageref{EqVarMoyal} below.

Valid in finite\/-\/dimensional set\/-\/up, the result of~\cite{KontsevichFormality} was 
known to be not working in the infinite dimension. 
It could not be applied 
to field\/-\/theoretic models, should one attempt to assemble such geometries via a limiting procedure by first taking infinitely many ``zero\/-\/dimensional field theories'' over the discrete topological space
$M^0=\bigcup_{\bx\in M^m}\{\bx\}$. In fact, not only is the geometry 
of $N^n$-\/valued 
fields (here, $n$~internal degrees of freedom attached at every base point~$\bx\in M^m$) infinite\/-\/dimensional if~$m>0$ but also does the mathematical apparatus to encode it become substantially more complex, cf.~\cite{gvbv,cycle16}. Many elements of differential calculus are 
known to be fragile in the course of transition from finite\/-\/dimensional geometry of~$N^n$ to the infinite jet spaces $J^\infty(\pi)$ for the bundles~$\pi$ of $N^n$-\/valued fields over~$M^m$, or to the infinite jet spaces of maps $J^\infty(M^m\to N^n)$, cf.~\cite{TMPh2010
} vs~\cite{Vaintrob} and~\cite{Norway} vs~\cite{AKZS} or contrast~\cite{gvbv} 
vs~\cite{ASSchwarz1993
}, \cite{cycle16} vs~\cite{KontsevichCyclic}, and~\cite{KontsevichFormality} vs this paper.


The aim of this paper is to develop 
a tool for regular deformation quantisation $({\bcA},{\times})\mapsto\bigl({\bcA}[[\hbar]],{\star}_\hbar\bigr)$ of field theory models. The commutative associative unital algebras $({\bcA},{\times})$ of local functionals 
equipped with variational Poisson structures $\{\cdot,\cdot\}_{\bcP}$ are the input data of quantisation algorithm; 
in the output one obtains the noncommutative products~${\star}_\hbar$, associative up to~$\bar{o}(\hbar^{\geqslant2})$, in the unital algebras~${\bcA}[[\hbar]]$.

\smallskip
This paper is structured as follows. In the next section we 
review the 
concept of deformation quantisation~
\cite{KontsevichFormality} for finite\/-\/dimensional affine Poisson manifolds~$\bigl(N^n$,\ $\{\cdot,\cdot\}_{\cP}\bigr)$: the deformation ${\times}\mapsto{\star}_\hbar$ is approached via summation over a class of weighted oriented graphs. 
In section~\ref{SecInfinite} we proceed 
with finite\/-\/dimensional affine bundles~$\pi$ 
over affine manifolds~$M^m$; the infinitesimal parts of deformations are now specified by the variational Poisson brackets~$\{\cdot,\cdot\}_{\bcP}$ for algebras~$\bcA$ of local functionals taking~$\Gamma(\pi)\to\Bbbk$. To extend the deformation quantisation technique to such set\/-\/up, we let elements of the Gel'fand calculus of singular linear integral operators 
enter the game in~\S\ref{SecElements}. Each graph in the Kontsevich summation formula now encodes a local variational (poly)\/differential operator. 
We then inspect in~\S\ref{SecJacobi} the geometric 
mechanism through which the new star\/-\/products can stay associative.
Taking star\/-\/product~\eqref{EqMoyal} by Weyl\/--\/Gr\"onewold\/--\/Moyal  as prototype, we illustrate the algorithm of variational deformation quantisation by presenting 
this structure's associative 
analogue~${\star }$ for the class of Hamiltonian total differential operators $\bigl\|P^{ij}_\tau(\bx)\,\Id^{|\tau|}/\Id\bx^\tau\bigr\|_{i=1\ldots n}^{j=1\ldots n}$ 
whose coefficients do not depend on sections
~$\bu=\phi(\bx)$.
On the other hand, for field\/-\/dependent Hamiltonian operators we indicate 
a channel for the 
associativity to leak at orders~$\bar{o}(\hbar^{\geqslant 2})$. 
This effect was altogether suppressed in the seminal picture
; originally invisible, it can appear only in the framework of fibre bundles~$\pi$ over the base manifold~$M^m$ of positive dimension~$m$. 

\section{
The Kontsevich $\star_\hbar$-\/product 
on finite\/-\/dimensional 
Poisson manifolds}\label{SecFinite}
\noindent%
In this section we recall the graph technique~\cite{KontsevichFormality} for deformation quantisation ${\times}\mapsto{\star}_{\hbar}$ on finite\/-\/dimensional affine Poisson manifolds~$\bigl(N^n$,\ $\{\cdot,\cdot\}_{\cP}\bigr)$.

\subsection{}\label{SecGraphs}
Let us first consider the direct problem of producing Lie algebra structures from a given associative product in the algebra of functions on~$N^n$.

\begin{lemma}
Let $\EuA$ be an associative algebra. 
Denote by~$\star$ the associative multiplication in~$\EuA$. Then the bi\/-\/linear skew\/-\/symmetric operation
\begin{equation}\label{EqLieBracket}
\{f,g\}\mathrel{{:}{=}} f\star g - g \star f,\qquad f,g\in\EuA,
\end{equation}
is a Lie bracket satisfying the Jacobi identity.\footnote{For example, let $\EuA$~be the algebra~$\cA=C^\infty(N^n)$ of smooth functions or the algebra~$\cA[[\hbar]]$ of formal power series on a given 
manifold~$N^n$. Then Lie bracket~\eqref{EqLieBracket} is not necessarily a bi\/-\/derivation and its differential order with respect to either of the arguments is not necessarily bounded.}
\end{lemma}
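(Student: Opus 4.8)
The plan is to verify the three defining properties of a Lie bracket directly from the associative\/-\/algebra axioms, since the operation $\{f,g\}:=f\star g-g\star f$ is assembled entirely from the product~$\star$. First I would record that bilinearity of~$\{\cdot,\cdot\}$ is inherited termwise from the distributivity and $\Bbbk$\/-\/linearity of~$\star$, while skew\/-\/symmetry $\{f,g\}=-\{g,f\}$ follows at once by interchanging the two summands in the definition. Neither of these steps uses associativity; they are purely formal.

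The only substantive step is the Jacobi identity
\[
\{\{f,g\},h\}+\{\{g,h\},f\}+\{\{h,f\},g\}=0.
\]
Here I would expand each nested bracket by the definition, e.g.
\[
\{\{f,g\},h\}=(f\star g)\star h-(g\star f)\star h-h\star(f\star g)+h\star(g\star f),
\]
together with its two cyclic permutations. This yields twelve triple products; applying associativity of~$\star$ to delete all parentheses, one finds that each of the six monomials $f\star g\star h$, $g\star h\star f$, $h\star f\star g$ and their reversals occurs exactly twice with opposite signs, so the total telescopes to~$0$. This cancellation is the single place where associativity is genuinely invoked, and it is the heart of the argument\,-- though it is bookkeeping rather than a conceptual difficulty.

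I expect no real obstacle: the claim is a formal identity that requires neither a unit element nor commutativity of~$\star$. It is worth stressing this generality, because the lemma will subsequently be applied to the noncommutative star\/-\/products~$\star_\hbar$, whose commutator bracket is thereby guaranteed to satisfy the Jacobi identity irrespective of the fact\,-- noted in the footnote\,-- that it need be neither a bi\/-\/derivation nor of bounded differential order in either argument.
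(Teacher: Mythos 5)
Your proof is correct: bilinearity and antisymmetry are formal, and the Jacobi identity follows by expanding the three nested commutators into twelve triple products and cancelling them pairwise after associativity removes the parentheses. This is essentially the same computation as in the paper, but the paper organizes it differently: it records the identity
\[
\Jac_{\{\cdot,\cdot\}}(f,g,h)=\sum_{\sigma\in S_3}(-)^\sigma\,\Assoc_\star\bigl(\sigma(f),\sigma(g),\sigma(h)\bigr),
\]
i.e.\ it groups your twelve monomials into six associators with alternating signs, and then invokes $\Assoc_\star\equiv0$ once. The two arguments prove the lemma equally well; what the paper's packaging buys is an identity valid in \emph{any} algebra (associative or not), which makes explicit that the Jacobiator is the obstruction measured by the associator and which is exactly the identity reused in the subsequent lemma, where the alternating sum over $S_3$ of the order-$\hbar^2$ part of the associator is shown to equal the Jacobiator of $\{\cdot,\cdot\}_\star$. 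Your telescoping of monomials establishes the vanishing but does not isolate that reusable identity.
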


\begin{proof}
Indeed, the Jacobiator $\Jac_{\{\cdot,\cdot\}}(f,g,h) = \sum_{\sigma\in\circlearrowright}\{\{\sigma(f),\sigma(g)\},\sigma(h)\}$ is assembled by using the sum of associators:
\[
\Jac_{\{\cdot,\cdot\}}(f,g,h) = \sum_{\tau\in S_3} (-)^\tau\, \Assoc_\star\bigl(\sigma(f),\sigma(g),\sigma(h)\bigr),
\qquad f,g,h\in\EuA.
\]
This tells us that the Jacobi identity for the bracket~$\{\cdot,\cdot\}$ is an obstruction to the associativity of the product~$\star$.
\end{proof}

\begin{cor}
Let $\{\cdot,\cdot\}_\cP$ be a Poisson bracket on~$N^n$ and~$\times$ be the multiplication in the algebra~$\cA=C^\infty(N^n)$. Suppose that a deformation $\times\mapsto\star=\times+\hbar\,\{\cdot,\cdot\}_\cP+\bar{o}(\hbar)$ of the product in~$\cA$ to a multiplication in~$\cA[[\hbar]]$ is such that $\star$~is associative at all orders of the deformation parameter~$\hbar$. Then this deformation $\times\mapsto\star$ yields a transformation $\{\cdot,\cdot\}_\cP\mapsto \{\cdot,\cdot\}=\{\cdot,\cdot\}_\cP+\bar{o}(1)$ of the Poisson bracket $\{\cdot,\cdot\}_\cP$ to a Lie, but not necessarily Poisson bracket~\eqref{EqLieBracket}.
\end{cor}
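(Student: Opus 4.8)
The plan is to read the transformation $\{\cdot,\cdot\}_\cP\mapsto\{\cdot,\cdot\}$ as the passage from the Poisson bracket to the (suitably normalised) commutator bracket~\eqref{EqLieBracket} built from the associative deformation~$\star$. The Lie property is then not something to establish afresh: it is precisely the content of the preceding Lemma. Since $\star$ is associative at all orders in~$\hbar$ by hypothesis, every antisymmetrised associator $\Assoc_\star$ vanishes, so the Jacobiator $\Jac_{\{\cdot,\cdot\}}$ is identically zero and $\{\cdot,\cdot\}$ is a genuine Lie bracket. What then remains is to (i)~identify the leading deformation term of $\{\cdot,\cdot\}$ with $\{\cdot,\cdot\}_\cP$, and (ii)~explain why this Lie bracket fails, in general, to be Poisson.

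For step~(i) I would simply expand the commutator using $f\star g=f\times g+\hbar\,\{f,g\}_\cP+\bar{o}(\hbar)$. The order-$\hbar^0$ contribution $f\times g-g\times f$ cancels because $\times$ is commutative, while the order-$\hbar^1$ contribution is $\hbar\bigl(\{f,g\}_\cP-\{g,f\}_\cP\bigr)=2\hbar\,\{f,g\}_\cP$ by the skew-symmetry of the Poisson bracket. Thus $f\star g-g\star f=2\hbar\,\{f,g\}_\cP+\bar{o}(\hbar)$, and after dividing out the overall formal factor $2\hbar$ — the normalisation implicit in reading~\eqref{EqLieBracket} as a deformation of $\{\cdot,\cdot\}_\cP$ — one obtains $\{\cdot,\cdot\}=\{\cdot,\cdot\}_\cP+\bar{o}(1)$, as asserted.

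Step~(ii) is the genuinely instructive point, and here I would emphasise the structural gap between the two notions. A Poisson bracket is a Lie bracket that is in addition a biderivation of~$\times$ in each argument, whereas the Lemma only delivers the Jacobi identity for~$\{\cdot,\cdot\}$. The higher-order terms of~$\star$ at $\hbar^{\geqslant2}$ are, in general, honest bidifferential operators of order $\geqslant1$ in each slot; these propagate into the commutator and obstruct the Leibniz rule — exactly the phenomenon already flagged in the footnote to the Lemma, namely that the commutator need be neither a biderivation nor of bounded differential order. Hence $\{\cdot,\cdot\}$ is Lie but need not be Poisson.

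The main obstacle I anticipate is conceptual rather than computational: the clause \emph{not necessarily Poisson} is a negative, genericity-type claim and so cannot be settled by any positive identity. I would therefore argue it by exhibiting the obstruction — observing that nothing in the construction forces the $\hbar^{\geqslant2}$ corrections of~$\star$ to be first order in derivatives — and, if a concrete witness is wanted, by pointing to the Moyal product~\eqref{EqMoyal}, whose $\hbar^3$-term is manifestly a third-order bidifferential operator, so that the induced commutator bracket, though Lie by the Lemma, violates the Leibniz rule. Throughout, care must be taken to keep the Lie property of the \emph{full} bracket separate from the biderivation property of its \emph{leading symbol}: the former holds exactly at all orders, whereas the latter survives only at order~$\hbar^1$.
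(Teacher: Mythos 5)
Your argument is correct and coincides with the paper's own treatment: the Corollary carries no separate proof there precisely because the Jacobi identity is immediate from the preceding Lemma once $\star$ is associative, the identification of the leading term is the expansion you give, and the clause ``not necessarily Poisson'' refers to the loss of the biderivation/bounded-order property recorded in the Lemma's footnote, for which the Moyal bracket is the standard witness. Your handling of the normalisation is also consistent with the paper's later convention $\{f,g\}_\star=\bigl.(f\star g-g\star f)/\hbar\bigr|_{\hbar:=0}=2B_1^{-}(f,g)$.
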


\begin{lemma}[\cite{KontsevichFormality}]
Denote by~$\times$ the associative multiplication in the algebra~$\cA=C^\infty(N^n)$. Suppose that a deformation $\times\mapsto\star=\times+\hbar\,B_1(\cdot,\cdot)+\hbar^2\,B_2(\cdot,\cdot)+\bar{o}(\hbar^2)$ is such that $B_1(\cdot,\cdot)$~is a 
bi\/-\/derivation and let $\star\ \text{mod}\:\bar{o}(\hbar^2)$ be associative up to~$\bar{o}(\hbar^2)$, that is,
\[
\Assoc_\star(f,g,h) = \bar{o}(\hbar^2)\qquad \text{for } f,g,h\in\cA[[\hbar]].
\]
Then the bi\/-\/linear skew\/-\/symmetric bi\/-\/derivation\footnote{By assumption, the leading deformation term $\hbar B_1(\cdot,\cdot)$ in $\star$ is a bi\/-\/derivation, hence same are its symmetric and skew\/-\/symmetric parts, $B_1^+(f,g)=\frac12(B_1(f,g)+B_1(g,f))$ and $B_1^-(f,g)=\frac12(B_1(f,g)-B_1(g,f))$, respectively.}
\[
\{f,g\}_\star\mathrel{{:}{=}}\left.\frac{f\star g-g\star f}{\hbar}\right|_{\hbar\mathrel{{:}{=}}0}
=2B_1^{-}(f,g)
\]
is a Poisson bracket.
\end{lemma}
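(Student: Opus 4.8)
The plan is to verify the four defining properties of a Poisson bracket for the operation $\{\cdot,\cdot\}_\star = 2B_1^{-}(\cdot,\cdot)$. Bilinearity and skew\/-\/symmetry are built into its very definition, and the Leibniz rule (the bi\/-\/derivation property in each argument) follows at once from the hypothesis that $B_1$ is a bi\/-\/derivation: as recorded in the footnote, the skew\/-\/symmetric part of a bi\/-\/derivation is again a bi\/-\/derivation. The only nontrivial axiom is therefore the Jacobi identity, and this is where the associativity of~$\star$ up to~$\bar{o}(\hbar^2)$ must be exploited.

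To prove the Jacobi identity I would work with the un\/-\/normalised commutator $[f,g]_\star\mathrel{{:}{=}} f\star g-g\star f$ and invoke the first Lemma of this section: for any product the Jacobiator of the commutator is the alternating sum of associators,
\[
\Jac_{[\cdot,\cdot]_\star}(f,g,h) = \sum_{\sigma\in S_3}(-)^\sigma\,\Assoc_\star\bigl(\sigma(f),\sigma(g),\sigma(h)\bigr).
\]
By hypothesis each $\Assoc_\star=\bar{o}(\hbar^2)$, whence the left\/-\/hand side is $\bar{o}(\hbar^2)$ as well. It remains to extract the coefficient of~$\hbar^2$ on the left and to identify it with~$\Jac_{\{\cdot,\cdot\}_\star}$.

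Expanding $[f,g]_\star=\hbar\,\{f,g\}_\star+\hbar^2\cdot 2B_2^{-}(f,g)+\bar{o}(\hbar^2)$, the first\/-\/order part is exactly~$\{\cdot,\cdot\}_\star$. When one inserts $A\mathrel{{:}{=}}[f,g]_\star$ into a second commutator $[A,h]_\star=A\star h-h\star A$ and expands~$\star$, the zeroth\/-\/order ($\times$\/-)contribution $A\times h-h\times A$ vanishes because~$\times$ is commutative; crucially, the same commutativity annihilates the contribution of the $\hbar^2$\/-\/term $2B_2^{-}(f,g)$ at this order, pushing it into~$\bar{o}(\hbar^2)$. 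The surviving leading term is thus
\[
[[f,g]_\star,h]_\star = \hbar^2\,\{\{f,g\}_\star,h\}_\star + \bar{o}(\hbar^2),
\]
and summing cyclically gives $\Jac_{[\cdot,\cdot]_\star}(f,g,h)=\hbar^2\,\Jac_{\{\cdot,\cdot\}_\star}(f,g,h)+\bar{o}(\hbar^2)$. Comparing this with the previous display forces the $\hbar^2$\/-\/coefficient $\Jac_{\{\cdot,\cdot\}_\star}(f,g,h)$ to vanish, which is precisely the Jacobi identity for~$\{\cdot,\cdot\}_\star$.

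The main obstacle is the bookkeeping in this last step: one must be certain that neither the higher deformation term~$B_2$ nor any mixed product contributes to the $\hbar^2$\/-\/coefficient of the double commutator, so that this coefficient depends on~$B_1^{-}$ alone. This is exactly where commutativity of the undeformed product~$\times$ is indispensable, since it forces the potentially dangerous lower\/-\/order pieces into order~$\hbar^3$. Once this is verified, associativity only to order~$\bar{o}(\hbar^2)$ already suffices to guarantee the full Jacobi identity for the leading bracket, completing the proof that~$\{\cdot,\cdot\}_\star$ is a Poisson bracket.
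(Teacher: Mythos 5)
Your proof is correct, and it reaches the Jacobi identity by a genuinely different route than the paper. The paper expands each associator $\Assoc_{\star}(f,g,h)$ order by order in~$\hbar$, checks that the $\hbar^0$- and $\hbar^1$-coefficients vanish automatically (the latter by the bi\/-\/derivation property of~$B_1$), and then takes the alternating sum over~$S_3$ of the $\hbar^2$-coefficients, verifying \emph{by hand} that the four $B_2$-terms cancel pairwise (using commutativity of~$\times$) and that the symmetric part~$B_1^{+}$ drops out, leaving exactly~$\Jac_{\{\cdot,\cdot\}_\star}$. You instead apply the exact algebraic identity from the section's first Lemma --- the Jacobiator of the commutator $[\cdot,\cdot]_\star$ equals the alternating sum of associators for \emph{any} bilinear product --- and then expand the double commutator rather than the associator; there the cancellations come for free, since $[f,g]_\star=\hbar\,2B_1^{-}(f,g)+\hbar^2\,2B_2^{-}(f,g)+\bar{o}(\hbar^2)$ only sees the skew parts, the $\times$-contribution to $[A,h]_\star$ vanishes by commutativity, and $B_2$ applied to the $O(\hbar)$ quantity~$A$ lands at order~$\hbar^3$ by $\Bbbk[[\hbar]]$-bilinearity. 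Your order\/-\/counting in that last step is the one place that needs care and you handle it correctly. What the paper's explicit expansion buys is a concrete formula for the $\hbar^2$-coefficient of the associator (including the $B_2$-terms), which is the prototype for the graph\/-\/level factorisation~$\Assoc_{\star_\hbar}=\Diamond(\cP,\Jac_\cP)$ studied afterwards; what your route buys is brevity and a clean separation between the universal identity (commutator Jacobiator equals alternating associator sum) and the $\hbar$-bookkeeping.
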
 

\begin{proof}
In the leading order, $\Assoc_{\star\ \text{mod}\:\bar{o}(\hbar^2)}(f,g,h)=\Assoc_{\times}(f,g,h)+\bar{o}(1)=\bar{o}(1)$.
At~$\hbar^1$ we have that $\Assoc_{\star\ \text{mod}\:\bar{o}(\hbar^2)}(f,g,h)=\hbar\,\bigl[ B_1(f,g)\times h + B_1(f\times g,h) - f\times B_1(g,h) - B_1(f,g\times h)\bigr]+\bar{o}(\hbar)=\bar{o}(\hbar)$ because $B_1$~is a derivation with respect to each argument. Next, at~$\hbar^2$ we obtain that
\begin{multline*}
\Assoc_{\star\ \text{mod}\:\bar{o}(\hbar^2)}(f,g,h) = \hbar^2\,\bigl[B_1(B_1(f,g),h) - B_1(f,B_1(g,h))\\
-\bigl(f\times B_2(g,h) - B_2(f,g)\times h + B_2(f,g\times h) - B_2(f\times g,h)\bigr)\bigr] + \bar{o}(\hbar^2).
\end{multline*}
We are given that this expression's leading term vanishes. By taking an alternating sum over the group of permutations of three arguments and recalling that 
the product~$\times$ is commutative, we deduce that
\begin{multline*}
\sum_{\sigma\in S_3} (-)^\sigma\,\Assoc_{\star\ \text{mod}\:\bar{o}(\hbar^2)}\bigl(\sigma(f),\sigma(g),\sigma(h)\bigr) ={}\\
\sum_{\sigma\in S_3} (-)^\sigma\,\bigl[ B_1\bigl(B_1(\sigma(f),\sigma(g)),\sigma(h)\bigr)
-B_1\bigl(\sigma(f),B_1(\sigma(g),\sigma(h)\bigr) \bigr] + \bar{o}(\hbar^2),
\end{multline*}
that is, all the four terms containing~$B_2(\cdot,\cdot)$ cancel out; in the sum over permutations they are grouped by (1st~$-$ 2nd) $+$~(3rd) $-$~(4th). Finally, let us split $B_1(\cdot,\cdot)=B_1^{+}(\cdot,\cdot)+B_1^{-}(\cdot,\cdot)$ and obtain that in fact, its symmetric part also cancels out in the alternating sum:
\[
\sum_{\sigma\in S_3} (-)^\sigma\,\Assoc_{\star\ \text{mod}\:\bar{o}(\hbar^2)}\bigl(\sigma(f),\sigma(g),\sigma(h)\bigr) =
\Jac_{\{\cdot,\cdot\}_\star}(f,g,h)+\bar{o}(\hbar^2),
\]
whence the assertion readily follows.
\end{proof}

\begin{example}
Let $f,g$ be functions in the Cartesian coordinates~$p$ and~$q$ on~$\BBR^2$. Consider the associative star\/-\/product
\[
(f\star g)(p,q;\hbar)=f{\bigr|}_{(p,q)}\exp\Bigl(
{\overleftarrow{\dd}}/{\dd p}\cdot\hbar\cdot
{\overrightarrow{\dd}}/{\dd q}\Bigr)g{\bigr|}_{(p,q)}.
\]
We have that
\[
\frac{\dd f}{\dd p}\cdot\frac{\dd g}{\dd q}=
\frac12\Bigl(\frac{\dd f}{\dd p}\cdot\frac{\dd g}{\dd q}+\frac{\dd g}{\dd p}\cdot\frac{\dd f}{\dd q}\Bigr)+
\frac12\Bigl(\frac{\dd f}{\dd p}\cdot\frac{\dd g}{\dd q}-\frac{\dd g}{\dd p}\cdot\frac{\dd f}{\dd q}\Bigr).
\]
By construction, we obtain
\[
\{f,g\}_{\star}=(f)
{\overleftarrow{\dd}}/{\dd p}\cdot
{\overrightarrow{\dd}}/{\dd q}(g)-
(g)
{\overleftarrow{\dd}}/{\dd p}\cdot
{\overrightarrow{\dd}}/{\dd q}(f),
\]
which is the two functions' Poisson bracket referred to the canonical Darboux coordinates~$p$ and~$q$.
\end{example}

\begin{rem}
From now on we shall always assume 
that the leading deformation term~$B_1(\cdot,\cdot)$ at~$\hbar^1$ in~$\star$ is skew\/-\/symmetric.
In the Kontsevich star\/-\/product, the symmetric part~$B_1^+$ of a given deformation term~$B_1$ might not be vanishing identically \emph{ab initio} but it can then be trivialised --~at the
expense of using suitable gauge transformations $f\mapsto f+\hbar D_1(f)+\ov{o}(\hbar)$, $g\mapsto g+\hbar D_1(g)+\ov{o}(\hbar)$ of its arguments (see~\cite{KontsevichFormality}).
\end{rem}

\subsection{}
Now suppose that a Poisson bracket $\{\cdot,\cdot\}_{\cP}$ is given on~$N^n$ in advance. 
Can the commutative associative multiplication~$\times$ in the algebra $C^{\infty}(N^n)\ni f,g$ be deformed to an associative 
star\/-\/product~$\star_{\hbar}$ such that the formal power series 
$f\mathbin{\star_{\hbar}}g=f\times g+\hbar\cdot\{f,g\}_{\cP}+\sum_{k=2}^{+\infty} \hbar^k\,B_k(f,g)$
is well defined
\,? More specifically, the bi-linear, not necessarily commutative star-product $\star_{\hbar}=\times+\hbar\{\cdot,\cdot\}_{\cP}+\sum_{k>1}\hbar^k B_k(\cdot,\cdot)$
must satisfy the four axioms:
\begin{enumerate}
\item 
it is associative,
\begin{equation}\tag{\ref{EqDefDiamond}${}'$}\label{EqAssocModuloJacobi}
(f\mathbin{\star_{\hbar}}g)\mathbin{\star_{\hbar}}h\doteq
 f\mathbin{\star_{\hbar}}(g\mathbin{\star_{\hbar}}h)
\quad \text{via}\ \{\{f,g\}_{\cP},h\}_{\cP}+\text{c.\,p.}=0,\qquad
f,g,h\in C^{\infty}(N^n),
\end{equation}
i.e.\ modulo the property of bracket $\{\cdot,\cdot\}_{\cP}$ on~$N^n$ to be Poisson;
\item 
the unit function $1\in C^{\infty}(N^n)$ remains the neutral element for~$\star_{\hbar}$; whatever $f\in C^{\infty}(N^n)$, one has that
$f\mathbin{\star_{\hbar}}1=f=1\mathbin{\star_{\hbar}}f$;
\item 
each term $B_k(\cdot,\cdot)$, including the skew\/-\/symmetric Poisson bracket $\{\cdot,\cdot\}_{\cP}=B_1(\cdot,\cdot)$ to start with at~$\hbar$, is a bi\/-\/linear differential operator of bounded order;
\item 
the product $\star_{\hbar}$ is (let to be) $\Bbbk[[\hbar]]$-\/linear over~$C^{\infty}(N^n)[[\hbar]]$.
\end{enumerate}

\begin{theor}[\cite{KontsevichFormality}]
For every affine $n$-\/dimensional Poisson manifold $(N^{n<\infty},\cP)$ there exists a star\/-\/product $\star_\hbar=\times+\hbar\,\{\cdot,\cdot\}_\cP + \sum_{k=2}^{+\infty} \hbar^k\,B_k(\cdot,\cdot)$ in~$\cA[[\hbar]]$ satisfying the above four axioms.
\end{theor}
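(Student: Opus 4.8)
The plan is to construct $\star_\hbar$ by an explicit weighted graph summation formula and then to verify the four axioms, associativity being the only nontrivial one.

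First I would fix the combinatorial input. For each order $k\geqslant0$ let $G_k$ be the finite set of admissible oriented graphs $\Gamma$ built on $k$ aerial vertices (to be decorated by copies of~$\cP$) and two ground vertices (to carry the arguments $f$ and $g$): every aerial vertex emits exactly two ordered edges, the ground vertices emit none, and graphs with loops, with double edges, or with an edge returning to its own source are discarded. To such a~$\Gamma$ I attach a bi\/-\/differential operator $B_\Gamma(\cdot,\cdot)$ by placing the coefficient matrix $P^{ij}(\bu)$ of~$\cP$ at every aerial vertex, contracting the upper index carried by each edge against a derivative $\dd/\dd u^\bullet$ that acts at the head of that edge, and finally evaluating the two arguments at the ground vertices. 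By construction $B_\Gamma$ is $\Bbbk$-\/linear and differential of order at most~$k$ in each slot; the empty graph reproduces~$\times$, while the single\/-\/edge graph reproduces~$\{\cdot,\cdot\}_{\cP}$.

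Second I would assign to each $\Gamma\in G_k$ a real weight $w_\Gamma$, namely an integral over the compactified configuration space of $k$ points in the upper half\/-\/plane together with two marked points on the real axis, taken modulo the affine group $z\mapsto az+b$ with $a>0$. The integrand is the ordered product, over the $2k$ edges, of the pullbacks of the harmonic angle one\/-\/form of Kontsevich's propagator; the real dimension~$2k$ of this space matches the number of one\/-\/forms, so the integral converges to a real number. I then put
\[
f\mathbin{\star_\hbar}g\mathrel{{:}{=}}\sum_{k=0}^{+\infty}\hbar^k\sum_{\Gamma\in G_k}w_\Gamma\,B_\Gamma(f,g),\qquad f,g\in\cA[[\hbar]],
\]
extended $\Bbbk[[\hbar]]$-\/linearly. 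This at once secures axioms~(3) and~(4), and the two leading terms give the required normalization $\times+\hbar\,\{\cdot,\cdot\}_{\cP}+\bar{o}(\hbar)$.

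The core of the argument is associativity, axiom~(1). Here I would expand $\Assoc_{\star_\hbar}(f,g,h)$ as a sum over admissible graphs with three ground vertices. For each such graph the product of its angle one\/-\/forms is closed, so Stokes' theorem on the compactified configuration space~$\bar{C}_{k,3}$ forces the sum of the boundary integrals over the codimension\/-\/one strata to vanish. These strata fall into two families: those in which a cluster of aerial points collapses onto the real axis, and those in which two interior aerial points collide. The first family recombines pairwise into the two bracketings of the associator and cancels; the second contributes a factor proportional to the Schouten square $\schouten{\cP,\cP}$ decorating the collapsed pair, that is, to~$\Jac_\cP$. This reproduces exactly the factorization $\Assoc_{\star_\hbar}=\Diamond\bigl(\cP,\Jac_\cP\bigr)$ of~\eqref{EqDefDiamond}, which vanishes precisely because~$\cP$ is Poisson. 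The main obstacle is making this boundary analysis rigorous: one must establish the quadratic relations among the weights~$w_\Gamma$ that drive the cancellation, equivalently that $\{w_\Gamma\}$ assembles into an $L_\infty$-\/morphism (the formality map) from polyvector fields to polydifferential operators. Axiom~(2) I would treat separately: substituting~$1$ for an argument annihilates every graph having an edge into the corresponding ground vertex (a derivative of a constant), and a short vanishing argument for the weights of the surviving graphs leaves only the empty graph, whence $f\mathbin{\star_\hbar}1=f=1\mathbin{\star_\hbar}f$.
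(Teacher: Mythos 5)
Your construction is correct and is essentially the route the paper itself indicates: the star\/-\/product is defined by the Kontsevich graph summation with weights given by integrals over compactified configuration spaces of points in the upper half\/-\/plane, and the verification of associativity via Stokes' theorem on $\bar{C}_{k,3}$ (equivalently, the quadratic weight relations of the formality morphism) is exactly what the paper defers to~\cite{KontsevichFormality} and the works cited alongside it. One caution on your admissibility condition: only tadpoles (an edge from a vertex to itself) and parallel multiple edges are excluded, whereas oriented cycles through several aerial vertices must be kept --- e.g.\ the ``eye'' graph with a two\/-\/cycle contributes at order $\hbar^2$ with weight $\tfrac{1}{6}$ in expansion~\eqref{EqUniversalStar} --- so if by ``loops'' you meant cycles rather than self\/-\/loops, that clause should be dropped.
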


The proof is constructive (cf.~\cite{CattaneoFelderCMP2000} and~\cite{OperadsAndMotives,Tamarkin98});
the graph technique~\cite{KontsevichFormality,Ascona96,MKZurichICM,MKParisECM} 
is a convenient way to encode the bi\/-\/differential terms $B_k(\cdot,\cdot)$ in perturbation series~$\star_{\hbar}$.
Every term in~$B_k(f,g)$ at~$\hbar^k$, $k\geqslant0$ is encoded by an oriented graph~$\Gamma$ with $k+2$~vertices,
of which two sinks contain the respective arguments~$f$ and~$g$ and each of the remaining 
$k$~internal vertices is a source for two oriented edges. (In total, there are~$k$ such wedges with $2k$~arrows in every such graph~$\Gamma$.) 
Neither tadpoles nor multiple edges are permitted (cf.~\cite{CattaneoFelderCMP2000}). 
Next, install a copy of the given Poisson bi\/-\/vector~$\cP$ at each of the $k$~tops of the wedges 
and decorate every edge in the graph~$\Gamma$ at hand with a summation index running from~$1$ to~$n=\dim N^n$
The two edges issued from each internal vertex are \textsl{ordered}, so that the precedent and antecedent edges correspond to the first and second indexes in a copy of the Poisson bi-vector~$\cP$.

\subsection{}
To encode multi\/-\/vectors $\Xi\in\Gamma\bigl(\bigwedge^* TN^n\bigr)$ in a standard way, consider the parity-odd neighbour $\Pi T^*N^n$ of cotangent bundle to the manifold $N^n$ and denote by $\bxi=(\xi_1,\dots,\xi_n)$ the $n$-tuple of $\BBZ_2$-parity odd fibre coordinates over a chart $U_{\alpha}\subseteq N^n$
with an $n$-\/tuple $\bu=(u^1,\ldots,u^n)$ of local coordinates. 
Whenever the values $\{u^i,u^j\}_{\cP}
{(\bu)}=P^{ij}(\bu)$ are given, 
construct the bi-vector $\cP=\tfrac{1}{2}\langle\xi_i P^{ij}\bigr|_{\bu}\xi_j\rangle\in\Gamma(\bigwedge^2 TU_{\alpha})$;
bi-vectors are Poisson if they satisfy the classical master\/-\/equation $\lshad\cP,\cP\rshad=0$, see footnote~\ref{RefPageCME} on p.~\pageref{RefPageCME}.

\begin{convention}
The correspondence between every decorated oriented 
edge and analytic expressions occurring in $B_k(\cdot,\cdot)$ is 
established in Fig.~\ref{
EqContract};
\begin{figure}[htb]
\[
\text{\raisebox{-37.5pt}{
\unitlength=1mm
\linethickness{0.4pt}
\begin{picture}(50.00,27.00)
\put(25.00,25.00){\vector(-1,-1){20.00}}
\put(3.33,3.33){\circle*{1.33}}
\put(26.33,26.33){\circle*{1.33}}
\put(13,15.33){\makebox(0,0)[cc]{$i$}}
\put(23.00,20){\makebox(0,0)[lc]{$\overrightarrow{\partial}\!/\partial\xi_i$}}
\put(10.33,7.33){\makebox(0,0)[lc]{$\overleftarrow{\partial}\!/\partial u^i$}}
\put(28.67,26.33){\makebox(0,0)[lc]{$\text{Obj}_{\text{tail}}$}}
\put(5.00,3.00){\makebox(0,0)[lc]{$\text{Obj}_{\text{head}}$}}
\put(32.00,16.00){\line(0,-1){2.67}}
\put(32.00,14.67){\vector(1,0){18.00}}
\end{picture}
}}
\sum_{i=1}^n(\text{Obj}_{\text{tail}})\frac{\overleftarrow{\partial}}{\partial\xi_i}\times\frac{\overrightarrow{\partial}}{\partial u^i}(\text{Obj}_{\text{head}})
\]
\caption{The matching of indices in the derivative falling on the arrowhead object and in the Poisson bi\/-\/vector
stored in the arrowtail vertex is due to the coupling of the two objects' differentials taken with respect to the canonical conjugate variables.}\label{EqContract}
\end{figure}
at every set of index values, the respective content of vertices in a connected graph component is multiplied using~$\times$ (cf.\ footnote~\ref{FootGraphIsFormula} on p.~\pageref{FootGraphIsFormula}).
The expressions determined by different connected components of one graph~$\Gamma$ in their formal sum are also multiplied by using the original product~$\times$.
\end{convention}

\begin{rem}
Because other arrows may stick into the vertices connected by an edge~%
$\xrightarrow{\ i\ }$ in~Fig.~\ref{EqContract}, the objects
$\text{Obj}_{\text{tail}}$ and $\text{Obj}_{\text{head}}$ contained there can be derivatives (with respect to
$u^{\alpha}$'s) of the bi-vector~$\cP$ or, specifically to $\text{Obj}_{\text{head}}$ but never possible to
$\text{Obj}_{\text{tail}}$, arguments~$f$ and~$g$ of the star\/-\/product. On the same grounds, because there is another arrow issued from the tail vertex with $\text{Obj}_{\text{tail}}$, the formula encoded by a graph~$\Gamma$ does in fact not depend on any of the auxiliary, parity\/-\/odd variables~$\xi_j$. 
\end{rem}

In the Kontsevich star\/-\/product, every graph is accompained with its weight $w(\Gamma)\in\BBR$; these numbers are obtained by calculating certain explicitly
given integrals over the configuration spaces of $k$~distinct points --~in fact, the graphs' vertices containing~$\cP$~-- on the 
Lobachevsky plane (in its Poincar\'e model in the upper half-plane), see~\cite{KontsevichFormality}.
The full set of rational values of weights for all graphs in an expansion $\star_\hbar\mod\bar{o}(\hbar^4)$ of the Kontsevich star\/-\/product has been obtained in~\cite{cpp}.

\begin{example}
For any 
functions $f,g\in C^{\infty}(N^n)$, the expansion 
$f\star_{\hbar}g\mod\bar{o}(\hbar^2)$ 
reads as follows:\footnote{The precedence\/-\/antecedence of edges is given 
by the ordering of indexes $i\prec j$, $i_1\prec j_1$,
$i_2\prec j_2$, and $k\prec\ell$ in the analytic formula, see~\eqref{EqStarOh2} below.}
\begin{multline}\label{EqUniversalStar}
\text{\raisebox{-8.5pt}{
\unitlength=0.7mm
\linethickness{0.4pt}
\begin{picture}(12.67,5.67)
\put(2.00,5.00){\circle*{1.33}}
\put(12.00,5.00){\circle*{1.33}}
\put(7.00,5.00){\makebox(0,0)[cc]{$\star$}}
\put(2.00,1.33){\makebox(0,0)[cc]{$f$}}
\put(12.00,1.33){\makebox(0,0)[cc]{$g$}}
\end{picture}
}}
= 
\text{\raisebox{-8.5pt}{
\unitlength=0.7mm
\linethickness{0.4pt}
\begin{picture}(15.00,5.67)
\put(0.00,5.00){\line(1,0){15.00}}
\put(2.00,5.00){\circle*{1.33}}
\put(13.00,5.00){\circle*{1.33}}
\put(2.00,1.33){\makebox(0,0)[cc]{$f$}}
\put(13.00,1.33){\makebox(0,0)[cc]{$g$}}
\end{picture}
}}
{+}\frac{\hbar^1}{1!}
\text{\raisebox{-12pt}{
\unitlength=0.7mm
\linethickness{0.4pt}
\begin{picture}(15.00,16.67)
\put(0.00,5.00){\line(1,0){15.00}}
\put(2.00,5.00){\circle*{1.33}}
\put(13.00,5.00){\circle*{1.33}}
\put(2.00,1.33){\makebox(0,0)[cc]{$f$}}
\put(13.00,1.33){\makebox(0,0)[cc]{$g$}}
\put(7.33,16.00){\circle*{1.33}}
\put(7.33,16.00){\vector(-1,-2){5.00}}
\put(7.33,16.00){\vector(1,-2){5.00}}
\end{picture}
}}
{+}\frac{\hbar^2}{2!}
\text{\raisebox{-12pt}{
\unitlength=0.7mm
\linethickness{0.4pt}
\begin{picture}(15.00,20.67)
\put(0.00,5.00){\line(1,0){15.00}}
\put(2.00,5.00){\circle*{1.33}}
\put(13.00,5.00){\circle*{1.33}}
\put(2.00,1.33){\makebox(0,0)[cc]{$f$}}
\put(13.00,1.33){\makebox(0,0)[cc]{$g$}}
\put(7.67,11.67){\circle*{1.33}}
\put(7.67,20.00){\circle*{1.33}}
\put(7.67,20.00){\vector(-1,-3){4.67}}
\put(7.67,20.00){\vector(1,-3){4.67}}
\put(7.67,11.67){\vector(-3,-4){4.00}}
\put(7.67,11.67){\vector(3,-4){4.33}}
\end{picture}
}}
{+}\frac{\hbar^2}{3}{\Biggl(}
\text{\raisebox{-12pt}{
\unitlength=0.7mm
\linethickness{0.4pt}
\begin{picture}(15.00,17.67)
\put(0.00,5.00){\line(1,0){15.00}}
\put(2.00,5.00){\circle*{1.33}}
\put(13.00,5.00){\circle*{1.33}}
\put(2.00,1.33){\makebox(0,0)[cc]{$f$}}
\put(13.00,1.33){\makebox(0,0)[cc]{$g$}}
\put(7.33,11.33){\circle*{1.33}}
\put(2.00,17.00){\circle*{1.33}}
\put(2.00,17.00){\vector(0,-1){11.33}}
\put(2.00,17.00){\vector(1,-1){5.33}}
\put(7.33,11.33){\vector(1,-1){5.33}}
\put(7.33,11.33){\vector(-1,-1){5.33}}
\end{picture}
}}
{-}
\text{\raisebox{-12pt}{
\unitlength=0.7mm
\linethickness{0.4pt}
\begin{picture}(15.00,18.00)
\put(0.00,5.00){\line(1,0){15.00}}
\put(2.00,5.00){\circle*{1.33}}
\put(13.00,5.00){\circle*{1.33}}
\put(2.00,1.33){\makebox(0,0)[cc]{$f$}}
\put(13.00,1.33){\makebox(0,0)[cc]{$g$}}
\put(7.33,11.33){\circle*{1.33}}
\put(7.33,11.33){\vector(1,-1){5.33}}
\put(7.33,11.33){\vector(-1,-1){5.33}}
\put(13.00,17.33){\circle*{1.33}}
\put(13.00,17.33){\vector(0,-1){11.67}}
\put(13.00,17.33){\vector(-1,-1){5.33}}
\end{picture}
}}
{\Biggr)}+\frac{\hbar^2}{6}
\text{\raisebox{-12pt}{
\unitlength=0.7mm
\linethickness{0.4pt}
\begin{picture}(15.00,20.33)
\put(0.00,5.00){\line(1,0){15.00}}
\put(2.00,5.00){\circle*{1.33}}
\put(13.00,5.00){\circle*{1.33}}
\put(2.00,1.33){\makebox(0,0)[cc]{$f$}}
\put(13.00,1.33){\makebox(0,0)[cc]{$g$}}
\put(2.00,15.00){\circle*{1.33}}
\put(13.00,15.00){\circle*{1.33}}
\put(13.00,15.00){\vector(0,-1){9.33}}
\put(2.00,15.00){\vector(0,-1){9.33}}
\bezier{64}(2.00,15.00)(7.00,9.00)(12.67,15.00)
\bezier{60}(13.00,15.00)(7.00,20.33)(2.67,15.00)
\put(11.67,14.00){\vector(1,1){0.67}}
\put(3.33,16.00){\vector(-1,-1){0.67}}
\end{picture}
}}
{+}\\
{}+\bar{o}(\hbar^2).
\end{multline}
Referred to any system of affine local coordinates $\bu=(u^1,\dots,u^n)$ on $U_{\alpha}\subseteq N^n$, 
for a given Poisson bi\/-\/vector $\cP{\bigr|}_{\bu}=\tfrac{1}{2}\langle\xi_iP^{ij}(\bu)\xi_j\rangle$ 
this sum of weighted graphs is realised by the formula%
\footnote{\label{FootGraphIsFormula}%
Note that a graph itself suggests the easiest\/-\/to\/-\/read way to write down the respective differential operator's formula; this 
will be particularly convenient in the variational setting of section~\ref{SecInfinite}, see 
p.~\pageref{pExampleGraphFormula}.}
\begin{multline}
f\mathbin{\star_{\hbar}}g=
f\times g+\frac1{1!}(f)\frac{\overleftarrow{\dd}}{\dd u^i}\cdot\hbar P^{ij}\cdot\frac{\overrightarrow{\dd}}{\dd u^j}(g)+
\frac1{2!}(f)
\left[
\begin{matrix}
\frac{\overleftarrow{\dd}}{\dd u^{i_1}_{\mathstrut}}\cdot\hbar P^{i_1j_1}\cdot\frac{\overrightarrow{\dd}}{\dd u^{j_1}}\\
\frac{\overleftarrow{\dd}}{\dd u^{i_2}}\cdot\hbar P^{i_2j_2}\cdot\frac{\overrightarrow{\dd}}{\dd u^{j_2}}
\end{matrix}
\right]
(g)+\\+
\frac1{3}\Biggl\{(f)\frac{\overleftarrow{\dd}}{\dd u^i}\frac{\overleftarrow{\dd}}{\dd u^k}\cdot
\hbar P^{ij}\cdot\frac{\overrightarrow{\dd}}{\dd u^j}(\hbar P^{k\ell})\cdot\frac{\overrightarrow{\dd}}{\dd u^{\ell}}(g)
-(f)\frac{\overleftarrow{\dd}}{\dd u^k}\cdot(\hbar P^{k\ell})\frac{\overleftarrow{\dd}}{\dd u^j}\cdot
\hbar P^{ij}\cdot\frac{\overrightarrow{\dd}}{\dd u^i}(g)\Biggr\}+{}\\
{}+\frac{1}{6}(f)\frac{\overleftarrow{\dd}}{\dd u^i}\cdot (\hbar P^{ij})\frac{\overleftarrow{\dd}}{\dd u^k}\cdot
\frac{\overrightarrow{\dd}}{\dd u^j}(\hbar P^{k\ell})\cdot \frac{\overrightarrow{\dd}}{\dd u^\ell}(g)
+\ov{o}(\hbar^2).\label{EqStarOh2}
\end{multline}
The values of (derivatives of) both arguments 
and coefficients of the Poisson bi\/-\/vector~$\cP$ are calculated
at~$\bu\in U_{\alpha}\subseteq N^n$ in the right\/-\/hand side of the above formula.
\end{example}

\begin{example}[Moyal\/--\/Weyl\/--\/Gr\"onewold]\label{ExMoyalAssoc}
Suppose that all coefficients~$P^{ij}$ of the Poisson bi\/-\/vector~$\cP$ are constant, which is a well defined property with respect to all local coordinate systems on the affine manifold~$N^n$ at hand.
In effect, the graphs with at least one arrow ariving at 
a vertex containing~$\cP$ make no contribution to the star\/-\/product~$\star$. The only 
contributing 
graphs are portrayed in this figure,
\begin{equation*}
f\mathbin{\star}g=
\text{\raisebox{-12pt}{
\unitlength=1mm
\linethickness{0.4pt}
\begin{picture}(10.00,5.67)
\put(0.00,5.00){\line(1,0){10.00}}
\put(2.00,5.00){\circle*{1.33}}
\put(8.00,5.00){\circle*{1.33}}
\put(2.00,3.33){\makebox(0,0)[ct]{$f$}}
\put(8.00,3.00){\makebox(0,0)[ct]{$g$}}
\end{picture}
}}
+\frac{\hbar^1}{1!}
\text{\raisebox{-17.5pt}{
\unitlength=1mm
\linethickness{0.4pt}
\begin{picture}(10.00,14.00)
\put(0.00,5.00){\line(1,0){10.00}}
\put(8.00,5.00){\circle*{1.33}}
\put(2.00,5.00){\circle*{1.33}}
\put(5.00,13.00){\vector(-1,-3){2.33}}
\put(5.00,13.00){\vector(1,-3){2.33}}
\put(5.00,13.00){\circle*{1.33}}
\put(6.00,14.00){\makebox(0,0)[lb]{$\cP$}}
\put(2.00,3.33){\makebox(0,0)[ct]{$f$}}
\put(8.00,3.00){\makebox(0,0)[ct]{$g$}}
\end{picture}
}}
+\frac{\hbar^2}{2!}
\text{\raisebox{-22.5pt}{
\unitlength=1.00mm
\linethickness{0.4pt}
\begin{picture}(10.00,21.67)
\put(0.00,5.00){\line(1,0){10.00}}
\put(8.00,5.00){\circle*{1.33}}
\put(2.00,5.00){\circle*{1.33}}
\put(5.00,13.00){\vector(-1,-3){2.33}}
\put(5.00,13.00){\vector(1,-3){2.33}}
\put(5.00,13.00){\circle*{1.33}}
\put(2.00,3.33){\makebox(0,0)[ct]{$f$}}
\put(8.00,3.00){\makebox(0,0)[ct]{$g$}}
\put(5.00,21.00){\vector(-1,-4){3.67}}
\put(5.00,21.00){\vector(1,-4){3.67}}
\put(5.00,21.00){\circle*{1.33}}
\end{picture}
}}
+\frac{\hbar^3}{3!}
\text{\raisebox{-22.5pt}{
\unitlength=1.00mm
\special{em:linewidth 0.4pt}
\linethickness{0.4pt}
\begin{picture}(10.00,21.66)
\put(0.00,5.00){\line(1,0){10.00}}
\put(8.00,5.00){\circle*{1.33}}
\put(2.00,5.00){\circle*{1.33}}
\put(5.00,13.00){\vector(-1,-3){2.33}}
\put(5.00,13.00){\vector(1,-3){2.33}}
\put(5.00,13.00){\circle*{1.33}}
\put(2.00,3.33){\makebox(0,0)[ct]{$f$}}
\put(8.00,3.00){\makebox(0,0)[ct]{$g$}}
\put(5.00,21.00){\vector(-1,-4){3.67}}
\put(5.00,21.00){\vector(1,-4){3.67}}
\put(5.00,21.00){\circle*{1.33}}
\put(5.00,16.00){\vector(-1,-3){3.00}}
\put(5.00,16.00){\vector(1,-3){3.00}}
\put(5.00,16.33){\circle*{1.33}}
\end{picture}
}}
+\cdots+\frac{\hbar^k}{k!}
\text{\raisebox{-22.5pt}{
\unitlength=1.00mm
\special{em:linewidth 0.4pt}
\linethickness{0.4pt}
\begin{picture}(10.00,21.66)
\put(0.00,5.00){\line(1,0){10.00}}
\put(8.00,5.00){\circle*{1.33}}
\put(2.00,5.00){\circle*{1.33}}
\put(5.00,13.00){\vector(-1,-3){2.33}}
\put(5.00,13.00){\vector(1,-3){2.33}}
\put(5.00,13.00){\circle*{1.33}}
\put(5.00,17.00){\makebox(0,0)[cc]{$\vdots$}}
\put(2.00,3.33){\makebox(0,0)[ct]{$f$}}
\put(8.00,3.00){\makebox(0,0)[ct]{$g$}}
\put(5.00,21.00){\vector(-1,-4){3.67}}
\put(5.00,21.00){\vector(1,-4){3.67}}
\put(5.00,21.00){\circle*{1.33}}
\put(8.33,17.00){\makebox(0,0)[lc]{$\Biggr\}k$}}
\end{picture}
}}
+\cdots.
\end{equation*}
These graphs are such that their weights in the power series combine it to the Moyal exponent,
\begin{equation}\tag{\ref{EqMoyal}}
(f\mathbin{\star}g)(\bu;\hbar)=\left.\left[(f(\bu))\exp\left(\frac{\overleftarrow{\dd}}{\dd u^i}\cdot\hbar P^{ij}\cdot
\frac{\overrightarrow{\dd}}{\dd v^j}\right)(g(\bv))\right]\right|_{\bu=\bv}.
\end{equation}
Here we accept that the use of every next copy of the bi-vector~$\cP$ creates a new pair of summation indexes.
\end{example}

\begin{rem}
The introduction of two identical copies, $\bu\in U_{\alpha}\subseteq N^n$ and $\bv\in U_{\alpha}\subseteq N^n$, of the geometry where the objects~
$f$ and~$g$ are defined 
reveals 
an idea that will be used heavily in what follows.
\end{rem} 

\begin{state}\label{PropMoyalAssoc}
The associativity of Moyal star\/-\/product~\eqref{EqMoyal} is established 
by the \emph{a posteriori} congruence mechanism. 
\end{state}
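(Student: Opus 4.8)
The plan is to promote the two-copy device of the preceding remark to a three-copy one, assigning an independent coordinate chart to each argument and performing the identification of copies only at the very end. Concretely, I introduce congruent copies $\bx$, $\bby$, $\bz$ of the chart $U_\alpha$ carrying $f$, $g$, $h$, and record a single Moyal product in bi\/-\/local form as $(f\star g)(\bu)=\bigl[\exp\bigl(\hbar P^{ij}\,\dd_{x^i}\,\dd_{y^j}\bigr)\,f(\bx)\,g(\bby)\bigr]\big|_{\bx=\bby=\bu}$. The point to keep in sight throughout is that, because $P^{ij}=\const$, the operator in the exponent carries no dependence on the base point, so that no derivative can ever fall on a copy of~$\cP$.

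First I would expand $(f\star g)\star h$. The outer product couples the composite slot to~$\bz$; writing $f\star g$ as the restriction of the bi\/-\/local expression to the diagonal $\bx=\bby$ and differentiating in the common value, the derivative on the composite slot becomes $\dd_{x^i}+\dd_{y^i}$ acting on $f(\bx)g(\bby)$, with nothing landing on the constant~$P$. Thus
$$((f\star g)\star h)(\bu)=\Bigl[\exp\bigl(\hbar P^{ij}(\dd_{x^i}+\dd_{y^i})\dd_{z^j}\bigr)\,\exp\bigl(\hbar P^{k\ell}\dd_{x^k}\dd_{y^\ell}\bigr)\,f(\bx)g(\bby)h(\bz)\Bigr]\big|_{\bx=\bby=\bz=\bu}.$$
Since every operator involved has constant coefficients they commute, and the two exponentials fuse into the single symmetric exponent $\hbar P^{ij}\bigl(\dd_{x^i}\dd_{y^j}+\dd_{x^i}\dd_{z^j}+\dd_{y^i}\dd_{z^j}\bigr)$. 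Carrying out the mirror computation for $f\star(g\star h)$ yields verbatim the same three\/-\/body exponent, differing only in the order in which the pairs were coupled. Imposing the \emph{a posteriori} congruence $\bx=\bby=\bz=\bu$ on both sides then produces one and the same function of~$\bu$, which is the desired associativity.

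The only genuine obstacle is to certify that this splitting of the outer derivative is exact, i.e.\ that not a single term with a derivative of~$P^{ij}$ survives; this is precisely the step at which the constancy of the bracket coefficients is indispensable, and it is the place where the argument would fail for a field\/-\/dependent~$\cP$. Once that is granted, associativity reduces to the commutativity of constant\/-\/coefficient operators together with the invariance of the fused exponent under permutations of the three copies, the congruence of the copies being restored only after the operators have finished acting.
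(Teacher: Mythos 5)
Your proposal is correct and follows essentially the same route as the paper's proof: the key identity $\bigl(f(u)g(v)|_{u=v}\bigr)\overleftarrow{\dd}/\dd u = \bigl[(f(u)g(v))(\overleftarrow{\dd}/\dd u+\overleftarrow{\dd}/\dd v)\bigr]|_{u=v}$ for derivatives of diagonal restrictions, the fusion of the inner and outer exponentials of commuting constant\/-\/coefficient derivations into a single symmetric three\/-\/body exponent, and the \emph{a posteriori} imposition of the congruence $\bx=\bby=\bz$. The only cosmetic difference is that you exhibit the fused symmetric exponent explicitly, whereas the paper stops at the equality of the two triple exponentials after an index relabelling via the Baker\/--\/Campbell\/--\/Hausdorff formula.
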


\begin{proof}[{Proof \textup{(}\textup{see}~\textup{\cite{IndelicatoCattaneoPQR2003})}}]
From the identity
\[
\left(f(u)\times g(v){\bigr|}_{u=v}\right)\frac{\overleftarrow{\dd}}{\dd u}=\left.
\left[(f(u)\times g(v))\left(\frac{\overleftarrow{\dd}}{\dd u}+\frac{\overleftarrow{\dd}}{\dd v}\right)\right]\right|_{u=v}
\]
we infer that $\left((f\star g)\star h-f\star(g\star h)\right)(\bu;\hbar)={}$
\begin{multline*}
{}=\Bigl[\Bigl[(f|_{\bu})
\exp\Bigl(\frac{\overleftarrow{\dd}}{\dd u^i}\hbar P^{ij}\frac{\overrightarrow{\dd}}{\dd v^j}\Bigr)(g|_{\bv})\Bigr]
\Bigr|_{\bu=\bv}
\exp\Bigl(\frac{\overleftarrow{\dd}}{\dd u^k}\hbar P^{k\ell}\frac{\overrightarrow{\dd}}{\dd w^{\ell}}\Bigr)(h|_{\bw})\Bigr]\Bigr|_{\bu=\bw}-\\-
\Bigl[(f|_{\bu})
\exp\Bigl(\frac{\overleftarrow{\dd}}{\dd u^i}\hbar P^{ij}\frac{\overrightarrow{\dd}}{\dd v^j}\Bigr)\Bigl[(g|_{\bv})
\exp\Bigl(\frac{\overleftarrow{\dd}}{\dd v^k}\hbar P^{k\ell}\frac{\overrightarrow{\dd}}{\dd w^{\ell}}\Bigr)(h|_{\bw})\Bigr]\Bigr|_{\bv=\bw}\Bigr]\Bigr|_{\bu=\bv}=\\=
\Bigl[(f|_{\bu})
\exp\Bigl(\frac{\overleftarrow{\dd}}{\dd u^i}\hbar P^{ij}\frac{\overrightarrow{\dd}}{\dd v^j}\Bigr)(g|_{\bv})
\exp\Bigl(\Bigl(\frac{\overleftarrow{\dd}}{\dd u^k}+\frac{\overleftarrow{\dd}}{\dd v^k}\Bigr)\cdot
\hbar P^{k\ell}\frac{\overrightarrow{\dd}}{\dd w^{\ell}}\Bigr)(h|_{\bw})\Bigr]\Bigr|_{\bu=\bv=\bw}-\\-
\Bigl[(f|_{\bu})
\exp\Bigl(\frac{\overleftarrow{\dd}}{\dd u^i}\hbar P^{ij}\cdot
\Bigl(\frac{\overrightarrow{\dd}}{\dd v^j}+\frac{\overrightarrow{\dd}}{\dd w^j}\Bigr)
\Bigr)\Bigl[(g|_{\bv})
\exp\Bigl(\frac{\overleftarrow{\dd}}{\dd v^k}\hbar P^{k\ell}\frac{\overrightarrow{\dd}}{\dd w^{\ell}}\Bigr)(h|_{\bw})\Bigr]\Bigr|_{\bu=\bv=\bw}\equiv0,
\end{multline*}
which is due to the Baker\/--\/Campbell\/--\/Hausdorff formula for the exponent of sums of \textsl{commuting} derivatives,
and by having indexes relabelled.
\end{proof}

\subsection{}\label{SecJacVanishVia}
Whenever the coefficients~$P^{ij}(\bu)$ are not constant on the domain $U_{\alpha}\subseteq N^n$, the classical
master\/-\/equation\footnote{\label{RefPageCME}%
The Jacobi identity for Poisson bracket $\{\cdot,\cdot\}_{\cP}$ is equivalent to the zero\/-\/value condition
$\lshad\cP,\cP\rshad(f,g,h)=0$ for all Hamiltonians~$f,g,h$; the tri\/-\/vector $\lshad\cP,\cP\rshad$ is viewed here as a tri\/-\/linear totally antisymmetric mapping and we denote by $\lshad\cdot,\cdot\rshad$ the \textsl{Schouten bracket} (i.e., parity\/-\/odd Poisson bracket); 
in coordinates, one proves that
$\lshad\cP,\cP\rshad\stackrel{\text{Th.}}{=}
(\cP)\frac{\overleftarrow{\dd}}{\dd u^i}\cdot\frac{\overrightarrow{\dd}}{\dd\xi_i}(\cP)-
(\cP)\frac{\overleftarrow{\dd}}{\dd\xi_i}\cdot\frac{\overrightarrow{\dd}}{\dd u^i}(\cP).$}
$\lshad\cP,\cP\rshad=0$ is a nontrivial constraint for the bi\/-\/vector~$\cP$.
Where is the Jacobi identity for the Poisson bracket~$\{\cdot,\cdot\}_{\cP}$ hidden in the associator 
$(f\mathbin{\star_{\hbar}}g)\mathbin{\star_{\hbar}}h-f\mathbin{\star_{\hbar}}(g\mathbin{\star_{\hbar}}h)$ for the full star\/-\/product\,?

\begin{example}\label{ExFactorH2}
It is easy to see that $\Assoc_{\star_\hbar}(f,g,h)=\tfrac{2}{3}\Jac_\cP(f,g,h)+\bar{o}(\hbar^2)$.
\end{example}

By definition, we put
\begin{equation}\label{EqJacFig}
\raisebox{3.3mm}
[6.5mm][3.5mm]{ 
\unitlength=1mm
\special{em:linewidth 0.4pt}
\linethickness{0.4pt}
\begin{picture}(12,15)
\put(0,-10){
\begin{picture}(12.00,15.00)
\put(0.00,10.00){\framebox(12.00,5.00)[cc]{$\bullet\ \bullet$}}
\put(2.00,10.00){\vector(-1,-3){1.33}}
\put(6.00,10.00){\vector(0,-1){4.00}}
\put(10.00,10.00){\vector(1,-3){1.33}}
\put(0.00,4.00){\makebox(0,0)[cb]{\tiny\it1}}
\put(6.00,4.00){\makebox(0,0)[cb]{\tiny\it2}}
\put(11.67,4.00){\makebox(0,0)[cb]{\tiny\it3}}
\end{picture}
}\end{picture}}
\ \ \ 
\mathrel{{:}{=}}
\text{\raisebox{-12pt}[25pt]{
\unitlength=0.70mm
\linethickness{0.4pt}
\begin{picture}(26.00,16.33)
\put(0.00,5.00){\line(1,0){26.00}}
\put(2.00,5.00){\circle*{1.33}}
\put(13.00,5.00){\circle*{1.33}}
\put(24.00,5.00){\circle*{1.33}}
\put(2.00,1.33){\makebox(0,0)[cc]{\tiny\it1}}
\put(13.00,1.33){\makebox(0,0)[cc]{\tiny\it2}}
\put(24.00,1.33){\makebox(0,0)[cc]{\tiny\it3}}
\put(7.33,11.33){\circle*{1.33}}
\put(7.33,11.33){\vector(1,-1){5.5}}
\put(7.33,11.33){\vector(-1,-1){5.5}}
\put(13,17){\circle*{1.33}}
\put(13,17){\vector(1,-1){11.2}}
\put(13,17){\vector(-1,-1){5.1}}
\put(3.00,10.00){\makebox(0,0)[cc]{\tiny$i$}}
\put(12.00,10.00){\makebox(0,0)[cc]{\tiny$j$}}
\put(24.00,10.00){\makebox(0,0)[cc]{\tiny$k$}}
\end{picture}
}}
{-}
\text{\raisebox{-12pt}[25pt]{
\unitlength=0.70mm
\linethickness{0.4pt}
\begin{picture}(26.00,16.33)
\put(0.00,5.00){\line(1,0){26.00}}
\put(2.00,5.00){\circle*{1.33}}
\put(13.00,5.00){\circle*{1.33}}
\put(24.00,5.00){\circle*{1.33}}
\put(2.00,1.33){\makebox(0,0)[cc]{\tiny\it1}}
\put(13.00,1.33){\makebox(0,0)[cc]{\tiny\it2}}
\put(24.00,1.33){\makebox(0,0)[cc]{\tiny\it3}}
\put(13,11.33){\circle*{1.33}}
\put(13,11.33){\vector(2,-1){10.8}}
\put(13,11.33){\vector(-2,-1){10.8}}
\put(18.5,17){\circle*{1.33}}
\put(18.5,17){\vector(-1,-1){5.2}}
\put(18.5,17){\vector(-1,-2){5.6}}
\put(13,15){\tiny $L$}
\put(17,12){\tiny $R$}
\put(4.00,10.00){\makebox(0,0)[cc]{\tiny$i$}}
\put(11.00,8.00){\makebox(0,0)[cc]{\tiny$j$}}
\put(22.00,10.00){\makebox(0,0)[cc]{\tiny$k$}}
\end{picture}
}}
{-}
\text{\raisebox{-12pt}[25pt]{
\unitlength=0.70mm
\linethickness{0.4pt}
\begin{picture}(26.00,16.33)
\put(0.00,5.00){\line(1,0){26.00}}
\put(2.00,5.00){\circle*{1.33}}
\put(13.00,5.00){\circle*{1.33}}
\put(24.00,5.00){\circle*{1.33}}
\put(2.00,1.33){\makebox(0,0)[cc]{\tiny\it1}}
\put(13.00,1.33){\makebox(0,0)[cc]{\tiny\it2}}
\put(24.00,1.33){\makebox(0,0)[cc]{\tiny\it3}}
\put(18.33,11.33){\circle*{1.33}}
\put(18.33,11.33){\vector(1,-1){5.5}}
\put(18.33,11.33){\vector(-1,-1){5.5}}
\put(13,17){\circle*{1.33}}
\put(13,17){\vector(-1,-1){11.2}}
\put(13,17){\vector(1,-1){5.1}}
\put(3.00,10.00){\makebox(0,0)[cc]{\tiny$i$}}
\put(13.00,10.00){\makebox(0,0)[cc]{\tiny$j$}}
\put(24.00,10.00){\makebox(0,0)[cc]{\tiny$k$}}
\end{picture}
}}
= 0. 
\end{equation}%
\vskip .5em
\noindent
In formulae, by ascribing the index~$\ell$ to the unlabeled edge, the identity reads
\[
(
\partial_\ell \cP^{ij} \cP^{\ell k} +
\partial_\ell \cP^{jk} \cP^{\ell i} +
\partial_\ell \cP^{ki} \cP^{\ell j}
)\,
\partial_i({
\it 1})\, \partial_j({
\it 2})\, \partial_k({
\it 3}) = 0.
\]
The coefficient of $\partial_i \otimes \partial_j \otimes \partial_k$ is the familiar form of the Jacobi identity.

To understand how sums of graphs can vanish by virtue of differential consequences of Jacobi identity~\eqref{EqJacFig}, let us note that for a given Poisson bi\/-\/vector~$\cP$ and for every derivation~$\dd_i$ falling on the Jacobiator~$\Jac_\cP(a,b,c)$, the Leibniz rule yields that
\[
\dd_i\bigl(\Jac_\cP(a,b,c)\bigr)=\bigl(\dd_i\bigl(\Jac(\cP)\bigr)\bigr)(a,b,c) +\Jac_\cP(\dd_i a,b,c)
+\Jac_\cP(a,\dd_i b, c) +\Jac_\cP(a,b,\dd_i c).
\]
The last three terms in the right\/-\/hand side of the above formula amount to a redefinition of Jacobiator's arguments; hence every such term vanishes. Consequently, the first term in which the derivation~$\dd_i$ acts on the two internal vertices of the Jacobiator itself is equal to zero: $\bigl(\dd_i\bigl(\Jac(\cP)\bigr)\bigr)(\cdot,\cdot,\cdot)=0$. One now proceeds recursively over arbitrarily large finite set of derivations that sumltaneously fall on the Jacobiator, then acting independently from each other according to the Leibniz rule.

\begin{define}
A \emph{Leibniz graph} is a graph whose vertices are either sinks, or the sources for two arrows, 
or the Jacobiator (which is a source for three arrows); 
there must be at least one Jacobiator vertex.
The three arrows originating from a Jacobiator vertex must land on three distinct vertices (and not on the Jacobiator itself).\footnote{Each edge falling on a Jacobiator works by the Leibniz rule on the two internal vertices in~it. Combined with expansion~\eqref{EqJacFig} of the Jacobiator using graphs, this tells us that every Leibniz graph expands to a sum of Kontsevich graphs which were introduced before.}
\end{define}

\begin{example}
An example of a Leibniz graph is given in Fig.~\ref{FigSample}.%
\begin{figure}[htb]
\begin{minipage}{0.3\textwidth}
\begin{align*}
\unitlength=1mm
\special{em:linewidth 0.4pt}
\linethickness{0.4pt}
\begin{picture}(40.67,35.00)
\put(15.00,20.00){\framebox(20.00,10.00)[cc]{$\bullet\quad\bullet$}}
\put(25.00,20.00){\vector(0,-1){15.00}}
\put(18.00,20.00){\vector(-1,-2){5.00}}
\put(32.00,20.00){\vector(1,-3){5.00}}
\put(13.00,10.00){\vector(0,-1){5.00}}
\put(13.00,10.00){\vector(-1,0){8.00}}
\put(13.00,0.00){\makebox(0,0)[cb]{\tiny(\ )}}
\put(25.00,0.00){\makebox(0,0)[cb]{\tiny(\ )}}
\put(5.00,10.00){\vector(0,1){8.00}}
\put(5.00,18.00){\vector(1,-1){8.00}}
\put(37.00,0.00){\makebox(0,0)[cb]{\tiny(\ )}}
\bezier{100}(5.00,10.00)(9.00,5.00)(13.00,10.00)
\put(13.00,10.00){\vector(1,1){0.00}}
\put(5.00,18.00){\line(0,1){12}}
\bezier{80}(5.00,30.00)(5.00,35.00)(10.00,35.00)
\put(10.00,35.00){\line(1,0){5.00}}
\bezier{80}(15.00,35.00)(20.00,35.00)(20.00,30.00)
\put(20.00,30.00){\vector(0,-1){0.00}}
\put(13.00,10.00){\circle*{1}}
\put(5.00,10.00){\circle*{1}}
\put(5.00,18.00){\circle*{1}}
\end{picture}
\end{align*}
\end{minipage}
\small
\begin{minipage}{0.4\textwidth}
\begin{itemize}
\item There is a cycle, 
\item there is a loop, 
\item there are no tadpoles in this graph,
\item an arrow falls back on $\Jac(\cP)$, 
\item and $\Jac(\cP)$ does not stand on all of the three sinks.
\end{itemize}
\end{minipage}
\normalsize
\caption{An example of Leibniz graph.}
\label{FigSample}
\end{figure}
\end{example}

Leibniz graphs 
encode (poly)dif\-fe\-re\-ntial operators $\Diamond(\cP, \Jac(\cP))$ whose arguments are at least one copy of the tri\/-\/vector~$\Jac(\cP)$ and possibly, the bi\/-\/vector~$\cP$ itself.\footnote{In Example~\ref{ExFactorH2} the Leibniz graph amounts to just one tri\/-\/vector vertex and no extra copies of the Poisson bi\/-\/vector in other internal vertices, of which there are none.}

\begin{proposition}\label{PropLeibnizGraphZero}
For every Poisson bi\/-\/vector~$\cP$ the value --\,at the Jacobiator $\Jac(\cP)$\,-- of every (poly)\/dif\-fe\-ren\-ti\-al operator encoded by the Leibniz graph(s) is zero.
\end{proposition}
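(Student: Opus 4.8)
The plan is to exploit the one structural feature that distinguishes a Leibniz graph: by hypothesis $\cP$ is Poisson, so its Jacobiator vanishes identically, $\Jac(\cP)=\lshad\cP,\cP\rshad=0$, and therefore $\Jac_\cP(a,b,c)=0$ for \emph{every} triple of arguments $a,b,c$. Since each Leibniz graph contains at least one Jacobiator vertex, the polydifferential operator $\Diamond(\cP,\Jac(\cP))$ it encodes is, after performing all the differentiations prescribed by the edges, a $\times$-product of functions one of whose factors is built from a copy of $\Jac(\cP)$. I would reduce the whole assertion to the claim that this Jacobiator factor is the zero function --- not merely $\Jac(\cP)$ itself, but $\Jac(\cP)$ carrying all the derivatives that the graph's incoming edges force onto it.

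The technical heart is the lemma, sketched in the text just before the definition of a Leibniz graph, that for \emph{any} composition of derivations $D=\dd_{\ell_1}\circ\cdots\circ\dd_{\ell_s}$ one has $D\,\Jac(\cP)=0$. I would prove this by induction on~$s$. The base case $s=0$ is the Poisson property. For the step, take $D'$ a composition of $s-1$ derivations with $(D'\Jac(\cP))^{ijk}\equiv0$ by the inductive hypothesis, apply one further derivation $\dd_m$ to the identity $(D'\Jac(\cP))^{ijk}\,\dd_i a\,\dd_j b\,\dd_k c\equiv0$, and expand by the Leibniz rule:
\begin{multline*}
0=\dd_m\bigl((D'\Jac(\cP))^{ijk}\,\dd_i a\,\dd_j b\,\dd_k c\bigr)\\
=(\dd_m D'\Jac(\cP))^{ijk}\,\dd_i a\,\dd_j b\,\dd_k c
+(D'\Jac(\cP))^{ijk}\bigl(\dd_i\dd_m a\,\dd_j b\,\dd_k c+\cdots\bigr).
\end{multline*}
The last three terms only redefine the arguments of $\Jac_\cP$ and so vanish by the inductive hypothesis; as $a,b,c$ are arbitrary, the first term forces $\dd_m D'\Jac(\cP)=0$, completing the induction.

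With the lemma available, I would read off the Jacobiator factor of an arbitrary Leibniz graph. Its three outgoing edges apply $\dd_i,\dd_j,\dd_k$ to three \emph{distinct} target vertices (never back to the Jacobiator, by definition), supplying the three arguments; every edge that instead falls on the Jacobiator distributes, by the Leibniz rule, across the two internal $\cP$-vertices into which the Jacobiator expands through~\eqref{EqJacFig}. Summing these Leibniz contributions over the two internal vertices reconstitutes precisely $D\,\Jac(\cP)$, where $D$ is the composition of all incoming derivations. By the lemma every component $(D\Jac(\cP))^{ijk}$ is the zero function, so the factor $\sum_{i,j,k}(D\Jac(\cP))^{ijk}\,\dd_i(\cdot)\,\dd_j(\cdot)\,\dd_k(\cdot)$ vanishes; since the entire graph's analytic expression is a $\times$-product containing this factor, the operator $\Diamond(\cP,\Jac(\cP))$ is zero. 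Several Jacobiator vertices, or edges running between two of them, only contribute further vanishing factors.

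I expect the main obstacle to be exactly the bookkeeping in the assembling step: one must check that the incoming derivations, once distributed by the Leibniz rule over the two $\cP$-vertices that comprise a Jacobiator, recombine into derivatives of the \emph{coefficient} $\Jac(\cP)$ and do not leak onto the Jacobiator's three arguments --- only then does the lemma apply. The footnote's observation that every Leibniz graph expands into a finite sum of Kontsevich graphs is what keeps this accounting finite and unambiguous, and it is the device I would lean on to make the matching of indices rigorous.
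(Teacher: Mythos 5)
Your proposal is correct and follows essentially the same route as the paper: the text preceding the definition of a Leibniz graph establishes exactly your key lemma (that any derivation falling on the Jacobiator annihilates it, because the terms redefining the arguments vanish and hence so does the term hitting the coefficient, recursively for arbitrarily many derivations), and the proposition is then read off by distributing the incoming edges over the Jacobiator vertex via the Leibniz rule. Your explicit induction on the number of derivations merely spells out what the paper compresses into ``one now proceeds recursively.''
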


\begin{cor}
To prove that the associator for the Kontsevich star\/-\/product~$\star_\hbar$ vanishes for every Poisson structure contained in each internal vertex within a graph expansion of~$\star_\hbar$,
it suffices to realize the associator as a sum of Leibniz graphs:\footnote{The same technique, showing the vanishing of a sum of Kontsevich graphs by writing it as a sum of Leibniz graphs, has been used in~\cite{f16} to solve another problem in the graph calculus.}
\begin{equation}\tag{\ref{EqDefDiamond}}
\Assoc_{\star_\hbar}(f,g,h) =
\Diamond\,\bigl(\cP,\Jac_\cP(\cdot,\cdot,\cdot)
\bigr)(f,g,h). 
\end{equation}
From Example~\ref{ExFactorH2} we already know that the factorizing polydifferential operator in~\eqref{EqDefDiamond} is $\Diamond=\tfrac{2}{3}\,\mathbf{1}+\bar{o}(1)$.
\end{cor}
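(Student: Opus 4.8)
The plan is to reduce the claim to a single Leibniz graph and then to show that the tensor factor contributed by any one of its Jacobiator vertices vanishes identically, so that the entire encoded operator does too. First I would invoke linearity of the graph\/-\/to\/-\/operator correspondence of Fig.~\ref{EqContract}: the value of a sum of Leibniz graphs is the sum of the values, so it is enough to treat one Leibniz graph $\Gamma$. Reading $\Gamma$ by that dictionary yields a sum over all edge\/-\/index values of a product, taken vertex by vertex, in which each vertex contributes its stored content differentiated by the derivations carried by the edges \emph{arriving} at it, while the two (resp.\ three) edges \emph{issued} from a bi\/-\/vector (resp.\ Jacobiator) vertex are the derivations that this vertex exports, through index contraction, onto its targets.

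Next I would isolate a Jacobiator vertex $V$, which exists by the very definition of a Leibniz graph. In local coordinates $\bu=(u^1,\dots,u^n)$ its stored content is the Jacobiator coefficient $J^{ijk}(\bu)=\dd_\ell\cP^{ij}\,\cP^{\ell k}+\dd_\ell\cP^{jk}\,\cP^{\ell i}+\dd_\ell\cP^{ki}\,\cP^{\ell j}$ of~\eqref{EqJacFig}, whose three free indices $i,j,k$ are carried by the edges issued from $V$ to three distinct targets; write $\dd_{m_1},\dots,\dd_{m_r}$ for the derivations brought in by the edges arriving at $V$. The factor that $V$ contributes is then $\dd_{m_1}\!\cdots\dd_{m_r}\bigl(J^{ijk}\bigr)$, contracted through $i,j,k$ against the (differentiated) contents of the three targets. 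Since this factor occurs in every summand, the whole matter reduces to proving $\dd_{m_1}\!\cdots\dd_{m_r}\bigl(J^{ijk}\bigr)\equiv 0$.

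The key step will be exactly the recursive Leibniz mechanism recorded before the definition. Because $\cP$ is Poisson, $\lshad\cP,\cP\rshad=0$, so the trilinear map $\Jac_\cP(\cdot,\cdot,\cdot)$ with coefficient tensor $J^{ijk}$ is identically zero; in particular $J^{ijk}\equiv 0$ for all $i,j,k$. Differentiating the vanishing value $\Jac_\cP(a,b,c)$ once by $\dd_m$ and using the Leibniz rule, the three argument\/-\/redefinition terms $\Jac_\cP(\dd_m a,b,c)$, $\Jac_\cP(a,\dd_m b,c)$ and $\Jac_\cP(a,b,\dd_m c)$ are again values of the zero map and vanish, which forces the remaining coefficient term $\bigl(\dd_m\Jac(\cP)\bigr)(a,b,c)=\dd_m(J^{ijk})\,\dd_i a\,\dd_j b\,\dd_k c$ to vanish as well; iterating over $m_1,\dots,m_r$ gives $\dd_{m_1}\!\cdots\dd_{m_r}(J^{ijk})\equiv 0$. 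It is essential here that, in the graph calculus, an edge arriving at $V$ differentiates only the \emph{local} content $J^{ijk}$ and not the downstream targets, so that the produced factor is the coefficient term $\bigl(\dd^\alpha\Jac(\cP)\bigr)(\cdot,\cdot,\cdot)$ rather than a total derivative, and the Leibniz identity is what certifies that this coefficient term is zero.

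The main obstacle I anticipate is precisely this bookkeeping of where the incoming derivations land. Once that is settled, the conclusion is immediate and robust against every other feature of $\Gamma$: extra bi\/-\/vector vertices, several Jacobiator vertices, cycles, or loops only alter the factors that multiply the zero factor contributed by $V$, whether inside its connected component or across components joined by~$\times$. Moreover, since the edges issued from $V$ are forbidden to return to $V$, no arriving derivation can be mistaken for one of its three exported slots, so the factorization through $\dd^\alpha\Jac(\cP)$ is clean. Hence every summand, and therefore the whole operator $\Diamond\bigl(\cP,\Jac(\cP)\bigr)$, vanishes for every Poisson bi\/-\/vector $\cP$, which is the assertion.
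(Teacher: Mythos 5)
Your proposal is correct and follows essentially the same route as the paper: the Corollary is immediate from Proposition~\ref{PropLeibnizGraphZero}, whose justification in the text is exactly your recursive Leibniz\/-\/rule argument showing that every string of derivations falling on the Jacobiator vertex annihilates its coefficient tensor, so that each Leibniz graph (and hence their sum realizing the associator) evaluates to zero for any Poisson bi\/-\/vector. Your extra bookkeeping remark --- that an incoming edge acts only on the vertex content, distributed over the Jacobiator's two internal vertices by the Leibniz rule --- matches the paper's own footnote to the definition of Leibniz graphs.
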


\begin{example}
The assemply of factorizing operator~$\Diamond\mod\bar{o}(\hbar)$, i.e.\ at order~$3$ in the expansion $\Assoc_{\star_\hbar}(\cdot,\cdot,\cdot)\mod\bar{o}(\hbar^3)$, is explained in~\cite{sqs15}; 
linear in its argument at~$\hbar^1$, the operator~$\Diamond\mod \bar{o}(\hbar)$ has differential order one with respect to the Jacobiator.

The next step $\Diamond\mod\bar{o}(\hbar^2)$ in factorization~\eqref{EqDefDiamond}, now at order~$4$ with respect to~$\hbar$ in the associator, is achieved in~\cite{cpp}.
\end{example}

\begin{state}[\cite{cpp}]\label{PropTwoEdges}
No solution~$\Diamond\mod\bar{o}(\hbar^2)$ of factorization problem~\eqref{EqDefDiamond} can have differential order less than two with respect ot the Jacobiator~$\Jac(\cP)$; conversely, there always exists a Leibniz graph at~$\hbar^2$ in the polydifferential operator~$\Diamond$ such that at least two arrows fall on the Jacobiator.
\end{state}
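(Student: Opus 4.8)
The plan is to turn the factorization~\eqref{EqDefDiamond} at order~$\hbar^4$ into a question of linear algebra over the finite\/-\/dimensional space of Kontsevich graphs on the three sinks $f,g,h$ with four internal $\cP$\/-\/vertices. Writing $\Diamond=\tfrac23\mathbf{1}+\hbar\,\Diamond_1+\hbar^2\,\Diamond_2+\bar{o}(\hbar^2)$ and recalling that each Jacobiator carries two copies of~$\cP$, the coefficient of~$\hbar^4$ in $\Diamond\bigl(\cP,\Jac(\cP)\bigr)$ equals $\Diamond_2\bigl(\Jac(\cP)\bigr)$, a sum of Leibniz graphs built from one Jacobiator vertex and exactly two further copies of~$\cP$. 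I grade such Leibniz graphs by the number $d\geqslant0$ of arrows landing on the Jacobiator vertex; by construction this $d$ is the differential order of~$\Diamond_2$ with respect to its argument~$\Jac(\cP)$, and it was $d=0$ at~$\hbar^2$ and $d=1$ at~$\hbar^3$. In these terms the two clauses assert that $\Assoc_{\star_\hbar}\mod\bar{o}(\hbar^4)$ lies in the image of the $d\geqslant0$ expansions but not in the image of the $d\leqslant1$ ones, and that a representative with $d=2$ is available.

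For the first clause I would begin by expanding the degree\/-\/four part of $\Assoc_{\star_\hbar}(f,g,h)=(f\star g)\star h-f\star(g\star h)$ into its full weighted sum of four\/-\/vertex Kontsevich graphs, using formula~\eqref{EqStarOh2} and its order\/-\/$\hbar^3$ and~$\hbar^4$ continuations. Next I enumerate every Leibniz graph with one Jacobiator and two internal $\cP$\/-\/vertices, expand each into Kontsevich graphs through the three\/-\/term rule~\eqref{EqJacFig}, and sort the outcomes by~$d$. The decisive step is to exhibit a linear functional on the four\/-\/vertex Kontsevich graphs that annihilates every expansion arising from a $d\leqslant1$ Leibniz graph while taking a nonzero value on $\Assoc_{\star_\hbar}\mod\bar{o}(\hbar^4)$: concretely, one isolates those Kontsevich graphs in the associator in which both extra $\cP$\/-\/vertices send an edge onto the two\/-\/vertex block into which the Jacobiator expands, and one shows that this ``two derivatives on the Jacobiator'' content carries a nonzero net weight that no single\/-\/arrow graph can reproduce once the Leibniz rule underlying~\eqref{EqJacFig} has redistributed the surplus derivatives onto the sinks $f,g,h$.

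For the converse I would proceed constructively, exhibiting one admissible~$\Diamond_2$ that carries a Leibniz graph with two arrows on the Jacobiator. Here I would take the explicit order\/-\/$\hbar^4$ factorization assembled in~\cite{cpp}, read off a Leibniz\/-\/graph summand in which each of the two extra $\cP$\/-\/vertices emits an edge into the Jacobiator, and invoke Proposition~\ref{PropLeibnizGraphZero} to confirm that the entire combination vanishes at every Poisson bi\/-\/vector, so that it genuinely solves~\eqref{EqDefDiamond}. Together with the first clause this pins the minimal differential order at exactly two, and shows moreover that every admissible~$\Diamond_2$ must contain at least one such graph.

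The main obstacle is the first clause, and within it the construction of the separating functional. The cancellations forced by the Leibniz rule~\eqref{EqJacFig} and by the total skew\/-\/symmetry of~$\Jac(\cP)$ are delicate, so it is not \emph{a priori} clear that the two\/-\/derivatives\/-\/on\/-\/Jacobiator part of the associator survives them instead of being absorbed into derivatives of $f,g,h$. Certifying this survival is essentially the order\/-\/$\hbar^4$ computation of~\cite{cpp}, on whose tabulated graph weights I would rely to verify that the functional is indeed nonzero on the associator.
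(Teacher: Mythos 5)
Your plan is essentially the proof that the paper relies on: Proposition~\ref{PropTwoEdges} is imported from~\cite{cpp} without an independent argument here, and the proof there is exactly the finite\/-\/dimensional linear algebra you describe --- expand $\Assoc_{\star_\hbar}\mod\bar{o}(\hbar^4)$ into weighted Kontsevich graphs with four internal vertices, expand the candidate Leibniz graphs via~\eqref{EqJacFig}, certify that the system restricted to graphs with at most one arrow on the Jacobiator is infeasible, and read off a two\/-\/arrow Leibniz graph from the explicit solution. Your reliance on the tabulated weights of~\cite{cpp} for the decisive nonvanishing is therefore not a gap but the substance of the cited computer\/-\/assisted computation.
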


\section{Deformation quantisation ${\times}\mapsto{\star}_\hbar$ in the algebras~$\bcA$ of local functionals for field models}\label{SecInfinite}
\noindent%
In this section we lift the Kontsevich graph technique from a quantisation ${\times}\mapsto{\star}_\hbar$ of the product~$\times$ in the algebra~$\cA$ of smooth functions on a finite\/-\/dimensional affine Poisson manifold~$\bigl(N^n$,\ ${\cP}\bigr)$ to the deformation $\times\mapsto\star_\hbar$ of the product of local functionals in the geometry of $N^n$-\/valued 
fields over an affine base manifold~$M^m$. 
We shall analyse the construction of local variational polydifferential operators which are encoded by the Kontsevich graphs (in particular, by the Leibniz graphs), now containing at each internal vertex a copy of the variational Poisson structure~$\{\cdot,\cdot\}_{\bcP}$. 
It is the Gel'fand formalism of singular linear integral operators supported on the diagonal~\cite{GelfandShilov} that becomes our working language.

\subsection{Field model geometry}\label{S3.1}
To extend the affine geometry of section~\ref{SecFinite}, let us list the ingredients of the affine bundle set\/-\/up.\footnote{In retrospect, the construction in section~\ref{SecFinite} can be viewed as a special case of such ``bundles'' over a point~$M^0$.}

Let $\bigl(M^m$,\ $\dvol(\cdot)\bigr)$ be an $m$-\/dimensional oriented affine real manifold equipped with a volume element.%
\footnote{Not excluding the case where the volume element~$\dvol(\bx)$ can nontrivially depend on the jets~$j^\infty_{\bx}(\phi)$ of sections~$\phi\in\Gamma(\pi)$ over points~$\bx\in M^m$, for the sake of brevity let us
not write such 
admissible second argument in~$\dvol\bigl(\cdot,j_\infty(\phi)(\cdot)\bigr)$.}
Let $\pi\colon E^{m+n}\to M^m$ be an affine bundle with $n$-\/dimensional fibres~$N^n$ over the base~$M^m$.
Denote by $\bu=(u^1$,\ $\ldots$,~$u^n)$ an $n$-\/tuple of local coordinates in the fibre~$N^n$.

Denote by $J^\infty(\pi)$ the total space of the bundle~$\pi_\infty$ of infinite jets~$j^\infty(\bs)(\cdot)$ for sections~$\bs\in\Gamma(\pi)$ of the bundle~$\pi$ over~$M^m$; the infinite jet space~$J^\infty(\pi)$ is the projective limit $\projlim_{k\to+\infty} J^k(\pi)$ of the sequence
of finite jet spaces~$J^k(\pi)$,
\[
M^m\xleftarrow{\:\pi\:} E^{m+n}=J^0(\pi)\gets J^1(\pi)\gets\ldots\gets J^k(\pi)\gets\ldots\gets J^\infty(\pi).
\]
It is clear that affine reparametrisations $\widetilde{\bx}(\bx)$ of local coordinates on the base~$M^m$ induce linear transformations of smooth sections' derivatives up to 
positive order~$k$ for all~$k>0$. By definition, we put~$[\bu]$ for an object's dependence on sections~$\bs$ and their derivatives up to arbitrarily large but still finite order, which is well defined by the above.

Denote by $\bar{H}^m(\pi)$ the vector space of integral functionals $\Gamma(\pi)\to\Bbbk$ of form $F=\int f\bigl(\bx_1,[\bu]\bigr)\cdot\dvol(\bx_1)$ such that $F(\bs)=\int_{M^m} f\bigl(\bx_1,j^\infty_{\bx_1}(\bs)\bigr)\cdot\dvol(\bx_1)$.
Viewed as functionals~$\Gamma(\pi)\to\Bbbk$ that take sections~$\phi\in\Gamma(\pi)$ 
over~$M^m$ to numbers, the integral objects $F,G,H\in\bar{H}^m(\pi)
$ can be shifted by using the null functionals $Z\colon\Gamma(\pi)\to0\in\Bbbk$. 
Those can be of topological nature,\footnote{\label{FootNullLag}%
For instance, set~$m=1$, let~$M^m\mathrel{{:}{=}}\BBS^1\cup\BBS^1$, take the usual angle variables
$\varphi_1,\varphi_2\colon\BBR^1\to\BBS^1$ on the two circles, and consider the null Lagrangian
$\cL=\int\Id\varphi_1-\int\Id\varphi_2$ that takes \textsl{every} section of an affine bundle~$\pi$ over such~$M^1$ to $2\pi-2\pi=0\in\Bbbk$. Obviously, the cohomology class~$\cL$ in~$H^1(\pi)$ is nonzero for the top\/-\/degree form
$\Id\varphi_1-\Id\varphi_2$; for it is only locally but not globally exact.}
$Z\in H^m(\pi)$. We always quotient them out in this paper by taking the factorgroup~$\bar{H}^m(\pi)/H^m(\pi)$. 
Secondly,
null integral functionals~$\Gamma(\pi)\to0\in\Bbbk$ can mark the zero class
$\int\Id_h(\Theta)\cong\int0\in\bar{H}^m(\pi)$ in the top\/-\/degree 
horizontal cohomology group%
\footnote{\label{FootNoBoundary}%
The integrations by parts~$\cong$ over~$M^m$ are nominally present in the construction of horizontal cohomolohgy groups~$\bar{H}^i(\pi)$ for the jet space~$J^{\infty}(\pi)$ over the bundle~$\pi$. 
By default, let us technically assume that no boundary terms would ever appear from~$\dd M^m$ in any formulae. 
For that, either let the base~$M^m$ be a closed manifold (hence~$\dd M^m=\varnothing$; for instance, take~$M^1=\BBS^1$) or choose the class~$\Gamma(\pi)$ of \textsl{admissible} sections for the bundle~$\pi$ is such a way that they decay so rapidly towards the boundary~$\dd M^m$
that all the integrands under study also vanish at~$\dd M^m$.
For instance, suppose that~$M^1=\BBR$ and all the field profiles~$\bu=\phi(\bx)$ are Schwarz.%
}
for $J^{\infty}(\pi)$ over $M^m$. 


By brute force, introduce the multiplication ${\times}\colon F\otimes G\mapsto
F\times G=\int f\bigl(\bx_1,[\bu]\bigr)\cdot\dvol(\bx_1)\times\int g\bigl(\bx_2,[\bu]\bigr)\cdot\dvol(\bx_2)=\iint f\bigl(\bx_1,[\bu]\bigr)\times g\bigl(\bx_2,[\bu]\bigr)\cdot\dvol(\bx_1)\,\dvol(\bx_2)\colon\Gamma(\pi)\to\Bbbk$ for
$G=\int g\bigl(\bx_2,[\bu]\bigr)\cdot\dvol(\bx_2)$. This yields the algebra~$\bcA$ of \textsl{local functionals},%
\footnote{We recall that in the (graded-)\/commutative set\/-\/up one has that
$\smash{\bigl(F\mathbin{\stackrel{\overline{\mathfrak{M}}^m(\pi)}{\times}}G\bigr)(\bs)=F(\bs)\mathbin{\stackrel{\Bbbk}{\times}}G(\bs)}$ but a known 
mechanism destroys this algebra homomorphism in a larger setting of formal noncommutative variational symplectic geometry and its calculus of cyclic words~(\cite{cycle16}, cf.~\cite{KontsevichCyclic,OlverSokolov1998}).}
also denoted by~$\overline{\mathfrak{M}}^m(\pi)$ in~\cite{gvbv,cycle16}

Referring only to the fibre's local portrait but not to its global organisation, we introduce the $\BBZ_2$-\/parity odd coordinates $\bxi=(\xi_1$,\ $\ldots$,~$\xi_n)$ in the reversed\/-\/parity cotangent spaces $\Pi T^*_{(\bx,\bs(\bx))}N^n$ to the fibres $N^n\simeq \pi^{-1}(\bx)$ of the bundle~$\pi$, see~\cite[\S2.1]{cycle16} and~\cite[\S2.1]{gvbv}. Let us note that for \textsl{vector} spaces~$N^n=\BBR^n$, the vector space isomorphism $T_{(\bx,\bs(\bx))}N^n\simeq N^n$ reduces this construction of Kupershmidt's variational cotangent bundle~\cite{KuperCotangent} over~$\pi_\infty\colon J^\infty(\pi)\to M^m$ to the Whitney sum~$J^\infty\bigl(\pi\mathbin{{\times}_M}\Pi\widehat{\pi}\bigr)$. 

\begin{convention}
The notation $\pi\mathbin{{\times}_M}\Pi\widehat{\pi}$ will be used in what follows to avoid an agglomeration of formulae. Indeed, the case of affine bundle~$\pi$ already impels the construction of horizontal jet bundle~$\overline{J^\infty_{\pi_\infty}}(\Pi T^*\pi)$ over the space~$J^\infty(\pi)$. 
\end{convention}

The variational bi\/-\/vectors $\bcP\in\bar{H}^m\bigl(\pi\mathbin{{\times}_M}\Pi\widehat{\pi}\bigr)$ are integral functionals of the form
\[
\bcP=\tfrac{1}{2}\int\langle\bxi\cdot A{\bigr|}_{(\bx,[\bu])}(\bxi)\rangle =
\tfrac{1}{2}\int\xi_i\,P^{ij}_\tau(\bx,[\bu])\,\xi_{j,\tau}\cdot\dvol(\bx),
\]
where the linear total differential operators $A=\bigl\| P^{ij}_\tau\cdot\bigl(\tfrac{\Id}{\Id\bx}\bigr)^\tau \bigr\|_{i=1,\ldots,n}^{j=1,\ldots,n}$ are skew\/-\/adjoint (to make the object~$\bcP$ well defined); for all multi\/-\/indexes~$\tau$, the parity\/-\/odd symbols $\xi_{j,\varnothing}=\xi_j$,\ $\xi_{j,x^k}$, $\xi_{j,x^k x^\ell}$,\ $\ldots$,\ $\xi_{j,\tau}$,\ $\ldots$ are the respective jet fibre coordinates.

The construction of variational $k$-\/vectors with $k\geqslant0$ is alike, see~\cite{cycle16}. Due to the introduction of parity\/-\/odd variables~$\bxi$ as canonical conjugates of the $n$-\/tuples~$\bu$, the vector space of all variational multivectors is naturally endowed with the parity\/-\/odd variational Poisson bracket, or variational \textsl{Schouten bracket}~$\lshad\cdot,\cdot\rshad$. Its construction --\,as descendent 
structure with respect to the Batalin\/--\/Vilkovisky Laplacian~$\Delta$\,-- was recalled in~\cite{gvbv,cycle16}.

\begin{define}
A variational bi\/-\/vector~$\bcP$ is called \textsl{Poisson} if it satisfies the classical master\/-\/equation~$\lshad\bcP,\bcP\rshad\cong0$.
The horizontal cohomology class equivalence ${}\cong0$ means, in particular, that the variational tri\/-\/vector~$\lshad\bcP,\bcP\rshad$, viewed as an integral functional, takes~$\Gamma(\pi)\to 0\in\Bbbk$. 
\end{define}

Every variational Poisson bi\/-\/vector~$\bcP$ induces the respective variational Poisson bracket~$\{\cdot,\cdot\}_{\bcP}\colon\bar{H}^m(\pi)\times\bar{H}^m(\pi)\to\bar{H}^m(\pi)$ on the space of integral functionals~$\Gamma(\pi)\to\Bbbk$. An axiomatic construction of~$\{\cdot,\cdot\}_{\bcP}$ is explained in Definition~\ref{DefVarPBr} on p.~\pageref{DefVarPBr}; it is the \textsl{derived bracket} $\lshad\lshad\bcP,\cdot\rshad,\cdot\rshad$ of two Hamiltonians (see~\cite[\S3]{cycle16}).%
\footnote{Note that an attempt to modify the volume element $\dvol(\cdot)$ 
on~$M^m$ can affect the output of~$\{\cdot,\cdot\}_{\bcP}$.}

The bracket~$\{\cdot,\cdot\}_{\bcP}$ is extended, via the Leibniz rule, from the vector space~$\bar{H}^m(\pi)$ of integral functionals~$H_1$,\ $H_2$,\ $\ldots$ to the Poisson structure on the algebra~$\bcA$
of (sums of) such functionals' formal products~$H_1\times\ldots\times H_\ell\colon\Gamma(\pi)\to\Bbbk$.

\begin{rem}
The value of~$\{\cdot,\cdot\}_{\bcP}$ in~$\bar{H}^m(\pi)$ at two integral functionals does not depend on a choice of representatives for the two arguments and for the variational Poisson bi\/-\/vector~$\bcP\in\bar{H}^m\bigl(
\pi\mathbin{{\times}_M}\Pi\widehat{\pi}\bigr)$, taken modulo those 
integral functionals which map all sections of the respective (super)\/bundle 
to~$0\in\Bbbk$. This is no longer necessarily so for the higher\/-\/order terms, beyond the variational Poisson bracket~$\{\cdot,\cdot\}_{\bcP}$ at~$\hbar^1$, in expansions~\eqref{EqUniversalStar}.
\end{rem}

\begin{rem}
Let us remember that every integral functional --\,e.g., taken as a building block in a local functional\,-- does carry its own integration variable which runs through that integral functional's own copy of the base~$M^m$ for the respective (super)\/bundle. For 
a given model over~$\bigl(M^m$,\ $\dvol(\cdot)\bigr)$,
the variational Poisson bi\/-\/vector $\bcP=\tfrac{1}{2}\int\xi_i\,P^{ij}_\tau(\bx,[\bu])\,\bigl(\tfrac{\Id}{\Id\bx}\bigr)^\tau(\xi_j)\cdot\dvol(\bx)$ and two Hamiltonians, $F=\int f(\bx_1,[\bu])\cdot\dvol(\bx_1)$ and $G=\int g(\bx_2,[\bu])\cdot\dvol(\bx_2)$, are integral functionals defined at sections of the bundles $\pi\mathbin{{\times}_M}\Pi\widehat{\pi}$ and~$\pi$, respectively.
In total, these three objects carry \textsl{three} copies of the given volume element~$\dvol(\cdot)$ on~$M^m$.

On the other hand, the variational Poisson bracket~$\{F,G\}_{\bcP}$ of~$F$ and~$G$ with respect to~$\bcP$ is an integral functional $\Gamma(\pi)\to\Bbbk$ that carries \textsl{one} copy of the volume element. Why and where to have the two copies of~$\dvol(\cdot)$ gone\,? We now recall an answer to this question.
\end{rem}

\subsection{Elements of the geometry of iterated variations}\label{SecElements}

\subsubsection{}
Let $(\bs,\bs^{\dagger})$ be a two\/-\/component section of the Whitney sum $\pi\times_M\Pi\hat{\pi}$ of bundles. Suppose that this section undergoes an infinitesimal shift 
along the direction
\[
(\delta\bs,\delta\bs^{\dagger})\bigl(\bx,\bs(\bx),\bs^{\dagger}(\bx)\bigr)=
\sum\limits_{i=1}^n\left(\delta\bs^i(\bx)\cdot\vec{e}_i(\bx)+
\delta\bs_i^{\dagger}(\bx)\cdot\vec{e}^{\mathstrut\ \dagger,i}(\bx)\right),
\]
which we decompose with respect to the adapted basis $(\vec{e}_i,\vec{e}^{\mathstrut\,\dagger,i})$
in the tangent space $T_{(\bx,\bs(\bx))}\pi^{-1}(\bx)\oplus T_{
\bs^{\dagger}(\bx)
}T^*_{(\bx,\bs(\bx))}\pi^{-1}(\bx)$.
At their attachment point, the vectors $\vec{e}_i$ and $\vec{e}^{\mathstrut\ \dagger,j}$ are --~by definition~--
tangent to the 
respective coordinate lines for variables~$u^i$ and~$\xi_j$. By construction, these vectors 
$\vec{e}_i$ and $\vec{e}^{\mathstrut\ \dagger,j}$ are dual; at every $i$ running from~$1$ to~$n$,
the two \textsl{ordered} couplings of
(co)vectors attached over $\bx\in M^m$ at the fibres' points --\,with values
$\bs(\bx)$ and $\bs^{\dagger}(\bx)$ of the respective coordinates\,-- are
\begin{equation}\label{EqTwoCouplings}
\langle
\stackrel{\text{first}}{\vec{e}_i},
\stackrel{\text{second}}{\vec{e}^{\mathstrut\,\dagger,i}}\rangle=+1
\quad\text{ and }\quad
\langle\stackrel{\text{first}}{\vec{e}^{\mathstrut\,\dagger,i}},\stackrel{\text{second}}{\vec{e}_i}\rangle=-1.
\end{equation}
Likewise, the coefficients
$\delta s^i(\,\cdot\,,\bs(\,\cdot\,))$ and $\delta s_i^{\dagger}(\,\cdot\,,\bs(\,\cdot\,),\bs^{\dagger}(\,\cdot\,))$
of the virtual shifts along the $i^{\text{th}}$ coordinate lines $u^i$ and $\xi_i$ are normalised 
via
\begin{equation}\label{EqNormalise}
\delta s^i(\bx,\bs(\bx))\cdot\delta s^{\dagger}_i(\bx,\bs(\bx),\bs^{\dagger}(\bx))\equiv1 \qquad\text{ (no summation!)}
\end{equation}
over all internal points $\bx\in\supp\delta s^i\subseteq M^m$. 

\begin{convention}
The differentials of functionals' densities are 
expanded with respect to the bases~$\vec{e}^{\mathstrut\,\dagger,j}$, $\vec{e}_i$ in the fibres tangent spaces;
the plus or minus signs in the sections' 
shifts are chosen in such a way that the couplings always evaluate to~$+1$.
\end{convention}

The directed variations $\overrightarrow{\delta\bs}$ and $\overrightarrow{\delta\bs^{\dagger}}$, as well as 
$\overleftarrow{\delta\bs}$ and $\overleftarrow{\delta\bs^{\dagger}}$, are singular linear integral operators supported, due 
to~\eqref{EqTwoCouplings}, on the diagonal. 
Each variation contains 
$n$~copies of Dirac's $\boldsymbol{\delta}$-\/distribution weighted by the respective coefficients $\delta s^i$ and $\delta s_i^{\dagger}$. We have that
\begin{align*}
\overrightarrow{\delta\bs}=&
\int\Id\bby\,\Bigl{\langle}(\delta s^i)\Bigl(\frac{\overleftarrow{\dd}}{\dd\bby}\Bigr)^{\sigma}(\bby)\cdot
\underrightarrow{
\stackrel{\text{first}}{\vec{e}_i(\bby)}|\stackrel{\text{second}}{\vec{e}^{\mathstrut\,\dagger,i}(\,\cdot\,)}
}
\Bigr{\rangle}\,\frac{\overrightarrow{\dd}}{\dd u^i_{\sigma}},
\\
\overrightarrow{\delta\bs^{\dagger}}=&
\int\Id\bz\,\Bigl{\langle}(\delta s^{\dagger}_i)\Bigl(\frac{\overleftarrow{\dd}}{\dd\bz}\Bigr)^{\sigma}(\bz)\cdot
\underrightarrow{
(\stackrel{\text{first}}{-\vec{e}^{\mathstrut\,\dagger,i})(\bz)}
|\stackrel{\text{second}}{\vec{e}_i(\,\cdot\,)}
}
\Bigr{\rangle}\,\frac{\overrightarrow{\dd}}{\dd\xi_{i,\sigma}},
\\
\overleftarrow{\delta\bs}=&
\int\Id\bby\,\frac{\overleftarrow{\dd}}{\dd u^i_{\sigma}}\,
\Bigl{\langle}
\underleftarrow{
\stackrel{\text{second}}{\vec{e}^{\mathstrut\,\dagger,i}(\,\cdot\,)}|\stackrel{\text{first}}{\vec{e}_i(\bby)}
}
\cdot
\Bigl(\frac{\overrightarrow{\dd}}{\dd\bby}\Bigr)^{\sigma}(\delta s^i)(\bby)
\Bigr{\rangle},
\\
\overleftarrow{\delta\bs^{\dagger}}=&
\int\Id\bz\,\frac{\overleftarrow{\dd}}{\dd\xi_{i,\sigma}}\,
\Bigl{\langle}
\underleftarrow{
\stackrel{\text{second}}{\vec{e}_i(\,\cdot\,)}|\stackrel{\text{first}}{(-\vec{e}^{\mathstrut\ \dagger,i})(\bz)}
}
\cdot
\Bigl(\frac{\overrightarrow{\dd}}{\dd\bz}\Bigr)^{\sigma}(\delta s^{\dagger}_i)(\bby)
\Bigr{\rangle},
\end{align*}
see~\cite[\S2.2--3]{gvbv} for details; for brevity, the indication of fibre points for given $\bs(\cdot)$ and
$\bs^{\dagger}(\cdot)$ is omitted in such formulae. 
Whenever acting 
on 
local functionals, 
these linear operators yield those functionals' responses to infinitesimal shifts of their arguments. 
i.\,e.\ of the sections at which the functionals are evaluated.

Let us now recall the mechanism of integration by parts (see~\cite{gvbv} and~\cite[\S2.5]{cycle16}).

\begin{lemma}
In absence of boundary terms, the on\/-\/the\/-\/diagonal integration by parts converts derivatives along one copy of the integration domain~$M^m$ into $(-1)\times$~derivatives with respect to the same variables, now referred to another copy of the base.
\end{lemma}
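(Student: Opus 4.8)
The plan is to reduce the statement to the elementary distributional behaviour of Dirac's $\boldsymbol{\delta}$ and its derivatives, since by construction each of the directed variations $\overrightarrow{\delta\bs}$, $\overrightarrow{\delta\bs^{\dagger}}$, $\overleftarrow{\delta\bs}$, $\overleftarrow{\delta\bs^{\dagger}}$ is a singular linear integral operator carrying $n$ copies of $\boldsymbol{\delta}$ supported on the diagonal of $M^m\times M^m$; this support is forced by the on\/-\/the\/-\/spot couplings~\eqref{EqTwoCouplings} together with the normalisation~\eqref{EqNormalise}. Concretely, I would write the relevant piece of any integrand as an integral $\int_{M^m}(\cdots)(\bby)\,\boldsymbol{\delta}(\bby-\bx)\,\dvol(\bby)$ taken over one copy of the base (the $\bby$\/-\/copy), where $\bx$ labels the point on the other copy at which the functional is being evaluated.

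First I would record the two facts that drive everything. The defining property of the distribution is $\int_{M^m}\phi(\bby)\,\boldsymbol{\delta}(\bby-\bx)\,\dvol(\bby)=\phi(\bx)$, and, because $\boldsymbol{\delta}(\bby-\bx)$ depends on its two arguments only through the difference $\bby-\bx$, its first derivatives are antisymmetric in the two copies,
\[
\frac{\dd}{\dd y^k}\,\boldsymbol{\delta}(\bby-\bx)=-\frac{\dd}{\dd x^k}\,\boldsymbol{\delta}(\bby-\bx),\qquad k=1,\ldots,m.
\]
The single\/-\/derivative case then goes as follows. Suppose a derivative $\dd/\dd y^k$ along the $\bby$\/-\/copy falls on a factor of the integrand. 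Integrating by parts over $\bby$ and discarding the boundary contribution (legitimate under the standing hypothesis $\dd M^m=\varnothing$, or of sufficiently rapid decay of the admissible sections towards $\dd M^m$), the derivative is transferred onto $\boldsymbol{\delta}(\bby-\bx)$; the antisymmetry above then turns $\dd/\dd y^k$ into $-\,\dd/\dd x^k$, a derivative referred to the other copy; finally the $\bby$\/-\/integration collapses the delta onto the diagonal $\bby=\bx$. Thus one derivative along one copy becomes $(-1)$ times the same derivative along the second copy, which is the assertion.

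The general multi\/-\/index case I would obtain by iterating this single step. For a multi\/-\/index $\sigma$ one has
\[
\Bigl(\frac{\dd}{\dd\bby}\Bigr)^{\sigma}\boldsymbol{\delta}(\bby-\bx)
=(-1)^{|\sigma|}\Bigl(\frac{\dd}{\dd\bx}\Bigr)^{\sigma}\boldsymbol{\delta}(\bby-\bx),
\]
so that after $|\sigma|$ integrations by parts the accumulated sign is $(-1)^{|\sigma|}$ and all derivatives have migrated to the $\bx$\/-\/copy, in agreement with the sign $(-1)$ per derivative claimed in the statement.

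I expect the main obstacle to be not the computation but the bookkeeping at the distributional level: one must verify that the transfer is well defined as an identity of singular integral operators supported on the diagonal, rather than merely formal; keep careful track of which of the several coexisting copies of $M^m$ each variable and each derivative belongs to; and confirm in each application that the integrations by parts genuinely produce no boundary term from $\dd M^m$. Once this bookkeeping is pinned down by~\eqref{EqTwoCouplings}--\eqref{EqNormalise}, the sign\/-\/reversal rule follows uniformly for derivatives of every order.
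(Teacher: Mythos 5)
Your proposal is correct and follows essentially the same route as the paper: integrate by parts over the $\bby$-copy in the absence of boundary terms, transfer the derivative onto the diagonal-supported kernel, use the antisymmetry of that kernel's derivative in its two arguments to trade $\dd/\dd y^\alpha$ for $-\dd/\dd x^\alpha$, and iterate over the multi-index. The only presentational difference is that the paper realises the Dirac distribution through the couplings $\langle\vec{e}_i(\bby),\vec{e}^{\,\dagger,i}(\bx)\rangle$ and derives the sign flip from the finite-difference definition of the derivative (and then notes that $\dd/\dd x^\alpha$ reshapes into the total derivative $\Id/\Id x^\alpha$ on jet-bundle objects), whereas you posit the antisymmetry of $\boldsymbol{\delta}(\bby-\bx)$ directly.
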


\begin{proof}
Consider a point~$\bby$ of the affine manifold~$M^n$ and denote by $\bby+\delta\bby\in M^n$ a near\/-\/by point with coordinates~$y^i+\delta y^i$, here and immediately below $1\leqslant i,\alpha\leqslant n$; the notation $\lim_{\delta\bby\to 0}$ makes obvious sense. For the sake of brevity, put $\sigma\mathrel{{:}{=}}\{x^\alpha\}$. We have that, due to the absence of boundary terms and then by definition,\footnote{\label{FootTD}%
The definition of total derivative $\Id/\Id x$, which is
\[
\Bigl(j^{\infty}(s)^*\Bigl(\Bigl(\frac{\Id}{\Id x}f\Bigr)(x,[u])\Bigr)\Bigr)(x_0)\stackrel{\text{def}}{=}
\Bigl(\frac{\dd}{\dd x}\Bigl(j^{\infty}(s)^*\Bigl(f(x,[u])\Bigr)\Bigr)\Bigr)(x_0),
\]
explains why the partial derivatives $\dd/\dd x$ reshape into $\Id/\Id x$ as soon as they arrive to the graph's vertices and
there, they act on the objects $f$ which are defined over jet bundles and which 
are evaluated at the infinite jets~$j^{\infty}(s)$ of sections~$s$.}
\begin{align*}
\int&\Id\bby \bigl\langle(\delta s^i)\frac{\overleftarrow{\dd}}{\dd y^\alpha}(\bby)\cdot
\underrightarrow{\vec{e}_i(\bby),\vec{e}^{\,\dagger,i}(\bx)}\,
\frac{\overrightarrow{\dd}}{\dd u^i_{x^\alpha}} f(\bx,[\bu],[\bxi])\bigr|_{j^\infty_\bx(\bolds,\bolds^\dagger)}\bigr\rangle={}
\\
{}&=\int\Id\bby\ \delta s^i(\bby)\bigl(-\frac{\overrightarrow{\dd}}{\dd y^\alpha}\bigr)
\bigl\langle\underrightarrow{\vec{e}_i(\bby),\vec{e}^{\,\dagger,i}(\bx)}\,
\frac{\overrightarrow{\dd}}{\dd u^i_{x^\alpha}} f(\bx,[\bu],[\bxi])\bigr|_{j^\infty_\bx(\bolds,\bolds^\dagger)}\bigr\rangle
\\
{}&\stackrel{\text{def}}{=} -\int\Id\bby\ \delta s^i(\bby)\,\lim\limits_{\delta y^\alpha\to+0}\frac{1}{\delta y^\alpha} 
\left\{
\begin{aligned}
\bigl\langle & \underbrace{
\vec{e}_i(\bby+\delta y^\alpha),\vec{e}^{\,\dagger,i}(\bx)
}_{\text{$+1$ if $\bx=\bby+\delta y^\alpha $}}\,
\frac{\overrightarrow{\dd}}{\dd u^i_{x^\alpha}} f(\bx,[\bu],[\bxi])\bigr|_{j^\infty_\bx(\bolds,\bolds^\dagger)}\bigr\rangle
\\
&-\bigl\langle\underbrace{\vec{e}_i(\bby),\vec{e}^{\,\dagger,i}(\bx)}_{\text{$+1$ if $\bx=\bby$}}\,
\frac{\overrightarrow{\dd}}{\dd u^i_{x^\alpha}} f(\bx,[\bu],[\bxi])\bigr|_{j^\infty_\bx(\bolds,\bolds^\dagger)}\bigr\rangle
\end{aligned}
\right\}
\\
{}&\stackrel{\text{def}}{=} \int\Id\bby\ \delta s^i(\bby)
\bigl\langle\underrightarrow{\vec{e}_i(\bby),\vec{e}^{\,\dagger,i}(\bx)}\,
\bigl(-\frac{\overrightarrow{\dd}}{\dd x^\alpha}\bigr)
\frac{\overrightarrow{\dd}}{\dd u^i_{x^\alpha}} f(\bx,[\bu],[\bxi])\bigr|_{j^\infty_\bx(\bolds,\bolds^\dagger)}\bigr\rangle
\\
{}&\stackrel{\text{def}}{=} \int\Id\bby\ \delta s^i(\bby)
\langle\underrightarrow{\vec{e}_i(\bby),\vec{e}^{\,\dagger,i}(\bx)}\rangle\cdot
\Bigl(\bigl(-\frac{\overrightarrow{\Id}}{\Id x^\alpha}\bigr)
\frac{\overrightarrow{\dd}}{\dd u^i_{x^\alpha}} f(\bx,[\bu],[\bxi])\Bigr)\Bigr|_{j^\infty_\bx(\bolds,\bolds^\dagger)}.
\end{align*}
For multi\/-\/indexes~$\sigma$ longer than~$\{x^\alpha\}$ the powers $(\overleftarrow{\dd}/\dd\bby)^\sigma$ are processed by repeated integrations by parts; this yields $(-\overrightarrow{\Id}/\Id\bx)^\sigma$. 
In the course of derivation of densities with respect to not~$a^i_\sigma$ but~$b_{j,\tau}$ and so, in the course of using the other of two (co)\/vectors' couplings, all reasonings are still performed in the exactly same way.
\end{proof}

\begin{cor}
The derivatives w.r.t.\ base variables are transported along an edge to the arrowhead according to the scenarios drawn in Fig.~\ref{FigByParts}; 
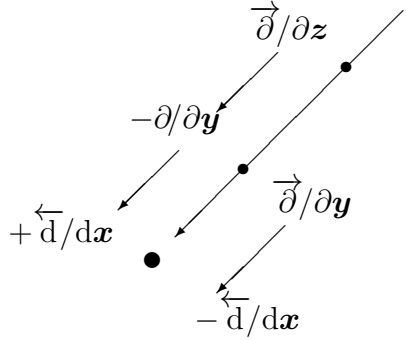
\begin{figure}[htb]
\begin{center}
\unitlength=1.5mm
\linethickness{0.4pt}
\begin{picture}(26.33,26.33)
\put(25.00,25.00){\vector(-1,-1){20.00}}
\put(20,20){\circle*{1}}
\put(11,11){\circle*{1}}
\put(5.33,12.67){\vector(-1,-1){5.33}}
\put(14.67,6.00){\vector(-1,-1){6.00}}
\put(14.00,21.33){\vector(-1,-1){5.33}}
\put(3.00,3.00){\circle*{1.33}}
\put(15.00,23.67){\makebox(0,0)[cc]{$\overrightarrow{\partial}\!/\partial\bz$}}
\put(5.00,15.33){\makebox(0,0)[cc]{$-
{\partial}\!/\partial\bby$}}
\put(-5.00,5.67){\makebox(0,0)[cc]{$+\overleftarrow{\mathrm{d}}\!/\mathrm{d}\bbx$}}
\put(11.33,-2.00){\makebox(0,0)[cc]{$-\overleftarrow{\mathrm{d}}\!/\mathrm{d}\bbx$}}
\put(17.00,8.33){\makebox(0,0)[cc]{$\overrightarrow{\partial}\!/\partial\bby$}}
\end{picture}
\vspace{.5cm}
\caption{Each on-the-diagonal push of a derivative along the edge creates an extra minus sign.}\label{FigByParts}
\end{center}\label{FigVariationsArrows}
\end{figure}
each derivative is referred to the copy of base manifold~$M^m$ over which the object or structure it acts on 
is defined.
\end{cor}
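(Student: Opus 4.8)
The plan is to read the corollary off directly from the integration\/-\/by\/-\/parts Lemma just established, transcribing its analytic content into the graph language. The key observation is that each edge carrying an index~$i$ in a Kontsevich graph is, by construction, one of the singular on\/-\/the\/-\/diagonal operators $\overrightarrow{\delta\bs}$, $\overrightarrow{\delta\bs^{\dagger}}$ (or their left\/-\/directed counterparts), whose kernels couple the tail and head vertices through the pairings~\eqref{EqTwoCouplings}. Because these couplings are supported on the diagonal, any total derivative $\Id/\Id\bx$ produced at a vertex --~recall from footnote~\ref{FootTD} that a partial derivative turns into a \emph{total} derivative as soon as it reaches a vertex~-- can be transported across the coupling at the cost of a single minus sign.

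First I would fix a single edge together with one derivation $\overrightarrow{\dd}/\dd x^\alpha$ acting at its tail on the bi\/-\/vector stored there. Applying the Lemma to this configuration shows that integrating by parts along the tail's copy of~$M^m$ replaces $\Id/\Id x^\alpha$ referred to that copy by $-\Id/\Id x^\alpha$ now referred to the \emph{head's} copy; that is, the derivative has been pushed along the edge to the arrowhead object, generating exactly one extra minus sign. This is the content of the scenarios in Fig.~\ref{FigByParts}: the incoming $\overrightarrow{\dd}/\dd\bz$ and the pushed $-\dd/\dd\bby$ reappear, after the on\/-\/the\/-\/diagonal transport, as the horizontal total derivatives $\pm\overleftarrow{\Id}/\Id\bx$ and the relocated $\overrightarrow{\dd}/\dd\bby$ at the head. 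Repeating the argument edge by edge, and handling multi\/-\/indices~$\sigma$ longer than a single $\{x^\alpha\}$ by the iterated\/-\/integration\/-\/by\/-\/parts remark at the end of the Lemma's proof, yields the overall sign $(-1)^{|\sigma|}$ and the transport rule $(\Id/\Id\bx)^\sigma\mapsto(-\Id/\Id\bx)^\sigma$ for a derivation of arbitrary order. The concluding clause --\,that each derivative is referred to the copy of~$M^m$ over which the object it acts upon is defined\,-- is then simply the bookkeeping convention that makes this relocation unambiguous.

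The main obstacle is purely the sign accountancy. Since the precedent and antecedent edges issuing from an internal vertex are \emph{ordered} and the two couplings~\eqref{EqTwoCouplings} are anti\/-\/symmetric ($+1$ versus $-1$), one must track precisely which copy of the base each derivative belongs to at every stage and verify that the minus sign generated by each on\/-\/the\/-\/diagonal push is neither accidentally cancelled nor double\/-\/counted when several arrows meet at a common vertex. Once the convention fixing each derivative to the copy of~$M^m$ hosting its target object is in force, the Leibniz rule guarantees that the pushes along distinct edges act independently of one another, so the per\/-\/edge result assembles into the stated global rule without further complication.
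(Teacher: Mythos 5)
Your proposal is correct and follows essentially the same route as the paper: the corollary is read off directly from the on\/-\/the\/-\/diagonal integration\/-\/by\/-\/parts Lemma, applied edge by edge, with the multi\/-\/index case handled by the repeated\/-\/integration remark at the end of the Lemma's proof (the paper itself offers no separate argument beyond this). Your closing remarks on sign bookkeeping and the independence of pushes along distinct edges are consistent with the paper's conventions and add nothing that conflicts with them.
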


\subsubsection{}
\label{pVariationsArrows}
Let us now explain how the edges in Kontsevich's graphs get oriented; 
this mechanism is unseparable from the integration by parts.
Every edge is realised by the linking of variations --~with respect to the canonical conjugate 
variables~$u^i$ and~$\xi_i$~-- of objects that are contained in the two adjacent vertices.
The orientation of such edge is the ordering $\delta\bs^{\dagger}\prec\delta\bs$ of singular linear integral operators; initially, they act as shown in Fig.~\ref{FigOrientArrow}.
\begin{figure}[htb]
\begin{center}
\unitlength=1mm
\linethickness{0.4pt}
\begin{picture}(28.67,26.99)
\put(25.00,25.00){\vector(-1,-1){20.00}}
\put(3.33,3.33){\circle*{1.833}}
\put(26.33,26.33){\circle*{1.833}}
\put(12.67,16.33){\makebox(0,0)[cc]{$i$}}
\put(22.00,19){\makebox(0,0)[lc]{$\overrightarrow{\delta s}^{\dagger}_i(\bz)$}}
\put(11.00,7){\makebox(0,0)[lc]{$\overleftarrow{\delta s}^i(\bby)$}}
\put(28.67,26.33){\makebox(0,0)[lc]{$\text{Obj}({\text{dvol}(\bbx_1)})$}}
\put(5.00,0){\makebox(0,0)[lc]{$\text{Obj}({\text{dvol}(\bbx_2)})$}}
\put(11.2,11.2){\circle*{1.5}}
\put(20.2,20.2){\circle*{1.5}}
\end{picture}
\caption{Decorated by $i$, such oriented edge appeared in the operation~$\bullet$ in~\cite{KontsevichFormality}; here we extend the formula which the edge encodes to the set\/-\/up of affine bundles over~$M^m$ by letting the points $\bx_1$,\ $\bx_2$,\ $\bby$,\ and~$\bz$ run through the integration domain~$M^m$ of
positive dimension~$m$.}\label{FigOrientArrow}
\end{center}
\end{figure}
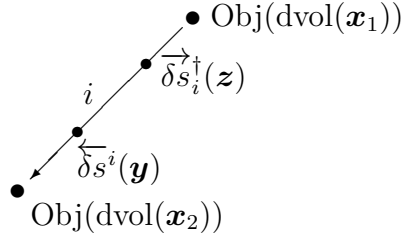
Every edge in an oriented graph~$\Gamma$ contributes to the summand 
(which $\Gamma$~encodes in the star\/-\/product~$\star_{\hbar}$) by the linking of variations and by the linking of differentials of objects contained in the vertices.
Novel with respect to the classical set-up of section~\ref{SecFinite}, the variations~$\delta\bs$ and~$\delta\bs^{\dagger}$ absorb
the derivatives 
--~previously, non\/-\/existent~-- along the base manifold~$M^m$.

The singular linear integral operators $\delta\bs$ and $\delta\bs^{\dagger}$ now act not only on the spaces of local functionals but also on elements of their native space of singular distributions. At the same time, regular integral functionals --~like~$\bcP$ and~$F$ or~$G$ from~$\bcA$,
which are contained in vertices of a graph~$\Gamma$ at hand,~-- themselves can discard the copy of volume elements $\dvol(\,\cdot\,)$ which they are equipped with and by this, reshape into singular integral operators. 
The linking of normalised variations yields 
singular linear integral operators that act via 
multiplication by~$\pm1$. 
At the end of the day, the linking of every two neighbouring objects converts one of them into a singular linear integral operator such that the (co)\/vectors contained in it act on their duals, resulting in the multiples~$+1$ or~$-1$.

\begin{example}
The edge $\bcP\xrightarrow{\:i\:}
F$ encodes the formula${}^{\text{\ref{FootForwardDelay}}}$
\begin{multline}
\iint\Id\bx_1\cdot\Bigl(\tfrac12\xi_{\alpha}P^{\alpha\beta}_{\sigma}\bigl|_{(\bx_1,[\bu])}\xi_{\beta,\sigma}\Bigr)
\frac{\overleftarrow{\dd}}{\dd\xi_{i,\tau}}
\\
\Bigl{\langle}
\underline{\underline{\stackrel{\text{first}}{\vec{e}_i(\bx_1)}}}|\ %
\iint\Id\bby_1\Id\bby_2\,\Bigl{\langle}\underline{\stackrel{\text{first}}{(-\vec{e}^{\mathstrut\ \dagger,i})(\bby_1)}}
\cdot\delta s_i^{\dagger}(\bby_1)|\delta s^i(\bby_2)\cdot
\underline{\stackrel{\text{second}}{\vec{e}_i(\bby_2)}}\Bigr{\rangle}\ %
|\underline{\underline{\stackrel{\text{second}}{\vec{e}^{\mathstrut\ \dagger,i}(\bx_2)}}}\Bigr{\rangle}
\\
\phantom{\Bigl{|}}^{\lceil}
\Bigl(+\frac{\overrightarrow{\Id}}{\Id\bx_2}\Bigr)^{\tau}
\Bigl(-\frac{\overrightarrow{\Id}}{\Id\bx_2}\Bigr)^{\sigma_2}
\phantom{\Bigr{|}}^{\rceil}
\frac{\overrightarrow{\dd}}{\dd u^i_{\sigma_2}}(f)(\bx_2,[\bu])\cdot\dvol(\bx_2).\label{pExampleGraphFormula}
\end{multline}
The singular distributions wright the diagonal $\bx_1=\bby_1=\bby_2=\bx_2$; both couplings evaluate to~$+1$, and normalisation~\eqref{EqNormalise} makes the edge's cargo invisible (indeed, it contributes via multiplication 
by~$+1$). 
\end{example}

\begin{rem}
The three singular operators in~\eqref{pExampleGraphFormula} can be directed in the opposite way, 
i.e.\ against the edge orientation along which the derivatives are transported 
in any case. This would keep the volume element at the arrow tail.
\end{rem}

\begin{example}
Consider the edge $\bcP\xrightarrow{\:j\:}
G$ encoding the formula${}^{\text{\ref{FootForwardDelay}}}$ 
\begin{multline*}
\iint\dvol(\bx_1)
\Id\bx_2\cdot\Bigl(\tfrac12\xi_{\alpha}P^{\alpha\beta}_{\sigma}\bigl|_{(\bx_1,[\bu])}\xi_{\beta,\sigma}\Bigr)
\frac{\overleftarrow{\dd}}{\dd\xi_{j,\tau}}
\\
\Bigl{\langle}
\underline{\underline{\stackrel{\text{second}}{\vec{e}_j(\bx_1)}}}|\ %
\iint\Id\bby_1\Id\bby_2\,\Bigl{\langle}\underline{\stackrel{\text{second}}{(-\vec{e}^{\mathstrut\ \dagger,j})(\bby_1)}}
\cdot\delta s_j^{\dagger}(\bby_j)|\delta s^j(\bby_2)\cdot
\underline{\stackrel{\text{first}}{\vec{e}_j(\bby_2)}}\Bigr{\rangle}\ %
|\underline{\underline{\stackrel{\text{first}}{\vec{e}^{\mathstrut\ \dagger,j}(\bx_2)}}}\Bigr{\rangle}
\\
\phantom{\Bigl{|}}^{\lceil}
\Bigl(+\frac{\overrightarrow{\Id}}{\Id\bx_2}\Bigr)^{\tau}
\Bigl(-\frac{\overrightarrow{\Id}}{\Id\bx_2}\Bigr)^{\sigma_2}
\phantom{\Bigr{|}}^{\rceil}
\frac{\overrightarrow{\dd}}{\dd u^j_{\sigma_2}}(g)(\bx_2,[\bu]).
\end{multline*}
In this case, both couplings evaluate to~$-1$ by~\eqref{EqTwoCouplings} still their values' product is~$+1$; the diagonal\/-\/making and normalisation mechanism remain the same as before.
\end{example}

Summarising, the ordering in~\eqref{EqTwoCouplings} is the only mechanism that creates sign factors;
the direction in which the operators act along the edge does not necessarily coincide with that edge's orientation
in a graph~$\Gamma$. The arrow specifies the direction to transport the 
derivatives by using the integrations by parts.

\begin{define}\label{DefVarPBr}
The \textsl{variational Poisson bracket} $\{F,G\}_{\bcP}$ of two integral functionals $F$ and $G$ with respect to a given variational Poisson bi-vector $\bcP$ is the graph%
\begin{equation}\label{EqLambdaGraph}
\raisebox{-12pt}{
\unitlength=0.6mm
\linethickness{0.4pt}
\text{\begin{picture}(20.00,20.67)
\put(0.00,5.00){\line(1,0){20.00}}
\put(10.00,20.00){\vector(-1,-2){7.33}}
\put(10.00,20.00){\vector(1,-2){7.33}}
\put(17.67,5.00){\circle*{1.33}}
\put(2.33,5.00){\circle*{1.33}}
\put(10.00,20.00){\circle*{1.33}}
\put(8.33,18.00){\makebox(0,0)[rb]{$\bcP$}}
\put(2.33,11.00){\makebox(0,0)[cc]{$i$}}
\put(18.20,11.00){\makebox(0,0)[cc]{$j$}}
\put(2.33,3.00){\makebox(0,0)[ct]{$F$}}
\put(17.67,3.00){\makebox(0,0)[ct]{$G$}}
\end{picture}}
}\quad.
\end{equation}
By using two pairs of normalised variations and by letting the volume element stay in the vertex containing~$G$, we realise the geometry of singular distributions 
encoded by this picture via the formula\footnote{\label{FootForwardDelay}%
The notation $\lceil\ldots\rceil$, not yet essential in the one\/-\/time variations here, will be explained in Convention~\ref{RemDelay} on p.~\pageref{RemDelay}.}
\begin{multline*}
\iiint\Id\bx_1\Id\bx(f|_{(\bx_1,[\bu])})\frac{\overleftarrow{\dd}}{\dd u^i_{\sigma_1}}{\phantom{\Bigl|}}
\phantom{\Bigl{|}}^{\lceil}
\Bigl(-\frac{\overleftarrow{\Id}}{\Id\bx_1}\bigr)^{\sigma_1}
\Bigl(+\frac{\overleftarrow{\Id}}{\Id\bx_1}\Bigr)^{\tau_1}{\phantom{\Bigl|}}
\vphantom{\Bigr{|}}^{\rceil}
\\
\Bigl{\langle}\underline{\stackrel{\text{first}}{\vec{e}^{\mathstrut\ \dagger,i}(\bx_1)}} |\ %
\iint\Id\bby_1\Id\bby_2\,\Bigl{\langle}
\stackrel{\text{first}}{\vec{e}^i(\bby_2)}\cdot\delta s^i(\bby_2)|\delta s_i^{\dagger}(\bby_1)\cdot
\stackrel{\text{second}}{(-\vec{e}^{\mathstrut\ \dagger,i})}(\bby_1)\Bigr{\rangle}\ %
|\underline{\stackrel{\text{second}}{\vec{e}_i(\bx)}}\Bigr{\rangle}\cdot{}
\\
{}\cdot\frac{\overrightarrow{\dd}}{\dd\xi_{i,\tau_1}}
\Bigl(\tfrac12\xi_{\alpha}P^{\alpha\beta}_{\zeta}\Bigl|_{(\bx,[\bu])}\xi_{\beta,\zeta}\Bigr)
\frac{\overleftarrow{\dd}}{\dd\xi_{j,\tau_2}}\cdot{}
\\
{}\cdot\Bigl{\langle}\underline{\underline{\stackrel{\text{first}}{\vec{e}_j(\bx)}}}|
\ %
\iint\Id\bz_1\Id\bz_2\,\Bigl{\langle}\stackrel{\text{first}}{(-\vec{e}^{\mathstrut\ \dagger,j})(\bz_1)}\cdot
\delta s_j^{\dagger}(\bz_1)|\delta s^j(\bz_2)\cdot
\stackrel{\text{second}}{\vec{e}_j(\bz_2)}\Bigr{\rangle}\ %
|\underline{\underline{\stackrel{\text{second}}{\vec{e}^{\mathstrut\ \dagger,j}(\bx_2)}}}\Bigr{\rangle}
\\
\phantom{\Bigl{|}}^{\lceil}
\Bigl(+\frac{\overrightarrow{\Id}}{\Id\bx_2}\Bigr)^{\tau_2}
\Bigl(-\frac{\overrightarrow{\Id}}{\Id\bx_2}\Bigr)^{\sigma_2}
\vphantom{\Bigr{|}}^{\rceil}
\frac{\overrightarrow{\dd}}{\dd u^j_{\sigma_2}}(g|_{(\bx_2,[\bu])})
\cdot\dvol(\bx_2).
\end{multline*}
The two pairs of couplings evaluate to $\underline{(-1)}\cdot(-1)\times(+1)\cdot\underline{\underline{(+1)}}=+1$.
The algorithm's output is (cf.~\cite{KuperCotangent,Olver})
\begin{equation}\label{EqUsualPBr}
\{F,G\}_{\bcP}
=\frac12\int\Bigl{\langle}\frac{\delta F}{\delta\bu}\cdot
\overrightarrow{{A}}\Bigl(\frac{\delta G}{\delta\bu}\Bigr)\Bigr{\rangle}-
\frac12\int\Bigl{\langle}\Bigl(\frac{\delta F}{\delta\bu}\Bigr)
\overleftarrow{{A}}\cdot\frac{\delta G}{\delta\bu}\Bigr{\rangle},
\end{equation}
where $A$ is the Hamiltonian operator built into the variational Poisson bi\/-\/vector $\bcP=\tfrac12\int\langle\bxi\cdot\overrightarrow{{A}}(\bxi)\rangle$. 
\end{define}

\begin{rem}
Because the operator~${A}$ is skew\/-\/adjoint, one could now integrate by parts, obtaining an even shorter expression,
\[
\cong\int\Bigl{\langle}\frac{\delta F}{\delta\bu}\cdot\overrightarrow{{A}}\Bigl(\frac{\delta G}{\delta\bu}\Bigr)
\Bigr{\rangle}.
\]
Still let it be remembered that it is the wedge 
graph in~\eqref{EqLambdaGraph} that does define the variational Poisson bracket, whereas this 
formula is its remote consequence. 
\end{rem}

\begin{rem}
When the variational Poisson bracket of two given functionals is assembled by Definition~\ref{DefVarPBr}
--\,to be evaluated at a section~$\bs\in\Gamma(\pi)$,\,-- 
the total derivatives~$\Id/\Id\bx$ immediately follow the partial derivative~ $\dd/\dd u^i_{\sigma}$ in the construction of variational derivatives~$\delta/\delta\bu$. Such inseparability of the horizontal and vertical derivations referring to their own geometries~$M^m$ and~$N^n$, respectively, is specific only to 
one\/-\/step reasonings (for instance, derivation 
of the Euler\/--\/Lagrange equations of motion from a given action functional).
However, a necessity to iterate the virtual shifts of a 
section~$\bs\in\Gamma(\pi)$ reveals a 
difficulty of the classical jet bundle geometry (e.\,g., this was ack\-now\-le\-dged 
in~\cite[\S15.1]{HenneauxTeitelboim}).

We shall now explain how the graded permutability of iterated variations is achieved in the course of both all the  couplings evaluation and 
transporting the derivatives along~$M^m$ to their final positions.
\end{rem} 

\subsubsection{}
Consider a vertex where two or more arrows arrive~--- or a vertex that contains~$\bcP$ (so that two partial derivatives, $\overrightarrow{\dd}/\dd\xi_{i_1,\tau_1}$ and~$\overleftarrow{\dd}/\dd\xi_{i_2,\tau_2}$, retro\/-\/act on its content) and that serves as the head for another arrow (hence, bringing the partial derivative~$\dd/\dd u^i_{\sigma}$ followed by~$(-\Id/\Id\bx)^{\sigma}$ and possibly, by the total de\-ri\-va\-ti\-ve(s)~$(+\Id/\Id\bx)^{\tau}$ specified by that arrow's tail), see
~\eqref{FigDriveThrough}:
\begin{equation}\label{FigDriveThrough}
\raisebox{-18pt}%
{
\unitlength=1mm
\linethickness{0.4pt}
\begin{picture}(45.00,20)
\put(0.00,15.00){\vector(1,-1){9.67}}
\put(20.00,15.00){\vector(-1,-1){9.67}}
\put(10.00,5.00){\circle*{1.33}}
\put(10.00,3.00){\makebox(0,0)[ct]{$F$}}
\put(3.33,12.67){\makebox(0,0)[lb]{$i_1$}}
\put(16.67,12.67){\makebox(0,0)[rb]{$i_2$}}
\put(35.00,20.00){\vector(1,-2){4.67}}
\put(40.00,10.00){\vector(-1,-1){5.00}}
\put(40.00,10.00){\vector(1,-1){5.00}}
\put(40.00,10.00){\circle*{1.33}}
\put(41.33,11.00){\makebox(0,0)[lb]{$\bcP$}}
\put(35.67,16.00){\makebox(0,0)[rc]{$j$}}
\put(36.00,7.33){\makebox(0,0)[rb]{$i_1$}}
\put(43.67,7.33){\makebox(0,0)[lb]{$i_2$}}
\end{picture}
}
\end{equation}
In which consecutive order are those partial and total derivatives, related to different edges, applied to the content of a vertex\,?

Furthermore, the associativity of Kontsevich star\/-\/product~$\star_\hbar$ is achieved in particular due to many cancellations of similar terms in the associator $(\cdot\mathbin{{\star}_\hbar}\cdot)\mathbin{{\star}_\hbar}\cdot - \cdot\mathbin{{\star}_\hbar}(\cdot\mathbin{{\star}_\hbar}\cdot)$. Consider those three\/-\/sink graphs which can be built using at least two pairs of weighted graphs in the inner and outer products, respectively. Their cancellation prescribes that the resulting analytic expressions, encoded by every such non\/-\/contributing graph with three sinks, must not depend on a scenario to compose that graph. We conclude that the action of total derivatives $\Id/\Id\bx_{\ell}$ in the \textsl{inner} star\/-\/products in the associator $(F\mathbin{\star_{\hbar}}G)\mathbin{\star_{\hbar}}H-F\mathbin{\star_{\hbar}}(G\mathbin{\star_{\hbar}}H)$ is delayed until all the partial derivatives~$\dd/\dd\bu_{\sigma}$ would have finished acting in the \textsl{outer} star\/-\/products.\footnote{This allowed an intrinsic regularisation of the Laplacian~$\Delta$ and variational Schouten bracket~$\lshad\cdot,\cdot\rshad$ in the Batalin\/--\/Vilkovisky formalism (see~\cite{gvbv,sqs13,prg15} and~\cite{CattaneoFelderCMP2000,HenneauxTeitelboim}), the concept was furthered to the formal noncommutative symplectic supergeometry and calculus of cyclic words (see~\cite{cycle16} and~\cite{KontsevichCyclic}).} 

\begin{state}[see~\cite{gvbv,sqs13,prg15,cycle16}]
The vertical derivations~$\dd/\dd u^i_{\sigma}$ and (the lifts~$\Id/\Id\bx_{\ell}$ of) horizontal derivations~$\dd/\dd \bx_{\ell}$ 
are performed 
at different stages. 
First, the vertical derivations~$\dd/\dd\bu_{\sigma}$ along~$N^n$,
together with their counterparts~$\dd/\dd\xi_{\tau}$ from 
the parity\/-\/odd symplectic dual, frame the edges of entire graph~$\Gamma$.
In the 
meantime, the derivatives along the base~$M^m$ are stored inside the variations~$\delta\bs$ by using~$\dd/\dd\bby_k$.
Lastly, 
all the horizontal derivatives~$(\pm\dd/\dd\bby_k)^{\sigma}$ are channelled from~$\delta s^i$ to~$(\mp\dd/\dd\bx_{\ell})^{\sigma}$, at the end of the day 
acting on the objects which are targets of~$\dd/\dd u^i_{\sigma}$.
\end{state}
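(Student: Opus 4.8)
The plan is to read the three-stage structure directly off the single-edge encoding established in \S\ref{SecElements}, and then to lift that reading to an arbitrary graph $\Gamma$; the only genuine difficulty will be the graded permutability at the multivalent vertices depicted in~\eqref{FigDriveThrough}. First I would fix, for each edge $\bullet\xrightarrow{\,i\,}\bullet$, the splitting of its contribution into a \emph{vertical} and a \emph{horizontal} part. By construction (Fig.~\ref{FigOrientArrow} together with the explicit formulae for $\overrightarrow{\delta\bs}$, $\overrightarrow{\delta\bs^\dagger}$, $\overleftarrow{\delta\bs}$, $\overleftarrow{\delta\bs^\dagger}$), an edge is the linking $\delta\bs^\dagger\prec\delta\bs$ of two singular integral operators in which the fibre derivations $\overleftarrow{\dd}/\dd\xi_{i,\tau}$ at the tail and $\overrightarrow{\dd}/\dd u^i_\sigma$ at the head act as \emph{ordinary partial derivatives} on the jet-fibre coordinates of the densities in the two vertices, whereas the base derivatives $(\overleftarrow{\dd}/\dd\bby)^\sigma$ sit exclusively on the coefficients $\delta s^i(\bby)$ inside the operators. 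I would verify this decomposition by inspecting formula~\eqref{pExampleGraphFormula}; it is already the assertion that the vertical data frames the edge while the horizontal data is stored in the variation.

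Second, for the whole graph I would argue that the framing (Stage~1) is purely algebraic. The operators $\dd/\dd u^i_\sigma$ and $\dd/\dd\xi_{i,\tau}$ are partial derivatives in the differential-polynomial algebra on the independent jet coordinates; they commute among themselves and ignore the base point. Assembling them along all edges of $\Gamma$ --~the wedges at the $\bcP$-vertices via $\dd/\dd\xi$, the arrowheads via $\dd/\dd u$~-- therefore produces a well-defined framed graph in which each $\delta s^i(\bby_k)$ still carries its frozen $(\dd/\dd\bby_k)^\sigma$. No base differentiation has yet taken place, so the classical one-step inseparability of $\dd/\dd u^i_\sigma$ and $\Id/\Id\bx$ does not arise at this point.

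Third comes the channelling. Edge by edge I would invoke the on-the-diagonal integration-by-parts Lemma above: since the admissibility hypothesis on $\Gamma(\pi)$ (footnote~\ref{FootNoBoundary}) removes all boundary contributions, each $\overleftarrow{\dd}/\dd\bby$ on $\delta s^i$ transfers to $-\overrightarrow{\Id}/\Id\bx$ acting, by the Corollary to that Lemma (Fig.~\ref{FigByParts}), on the object that was the target of $\dd/\dd u^i_\sigma$; iterating over $\sigma$ yields $(-\Id/\Id\bx)^\sigma$, while the coupling cargo of~\eqref{EqTwoCouplings} collapses the diagonal and contributes only a sign whose product over the two couplings per edge is $+1$. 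This shows that the horizontal derivatives are channelled from $\delta s^i$ to total derivatives on the targets and that they act \emph{after} the $\dd/\dd\bu$ have finished framing --~precisely the stated temporal separation.

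The main obstacle will be the well-definedness of this last step at a vertex into which several edges converge, or at a $\bcP$-vertex that is simultaneously a head, as in~\eqref{FigDriveThrough}: one must show that the integrations by parts coming from the different edges may be performed in any order without altering the resulting polydifferential operator. I would derive this graded permutability from two facts --~that each $\dd/\dd\bby_k$ refers to its own copy of $M^m$ and so commutes with the fibre derivations and with the base derivatives belonging to the other edges, and that the transport of Fig.~\ref{FigByParts} is $\Bbbk$-linear and strictly local on the diagonal. The delay convention recalled on p.~\pageref{RemDelay}, which defers the total derivatives of the inner star-products until the partial derivatives of the outer products have acted, then legitimises deferring \emph{all} horizontal transport to the end and guarantees that the outcome is independent of the scenario by which $\Gamma$ was composed, which is exactly the content of the Proposition.
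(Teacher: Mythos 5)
Your reconstruction of the three\/-\/stage procedure (vertical framing, storage of the base derivatives inside the variations, final channelling by on\/-\/the\/-\/diagonal integration by parts) matches the machinery of \S\ref{SecElements}, and your reading of a single edge via Fig.~\ref{FigOrientArrow} and formula~\eqref{pExampleGraphFormula} is correct. The gap is in your treatment of the ``main obstacle''. You propose to prove that the integrations by parts coming from different edges converging on one vertex ``may be performed in any order'', deriving this graded permutability from the fact that each $\dd/\dd\bby_k$ lives on its own copy of~$M^m$ and hence commutes with the fibre derivations. That commutativity holds only \emph{while} the base derivatives are still frozen on the coefficients $\delta s^i(\bby_k)$; the moment one of them is integrated by parts it becomes a total derivative $-\Id/\Id\bx$ acting on the jet\/-\/dependent content of the vertex, and $\Id/\Id\bx$ does \emph{not} commute with a partial derivative $\dd/\dd u^j_\tau$ brought in by another edge at the same vertex (their commutator is again a partial derivative). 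So the order genuinely matters and order\/-\/independence cannot be proved; it is exactly the ambiguity which the Proposition resolves by \emph{prescribing} the order displayed in~\eqref{EqPartialTotal}: all vertex\/-\/incoming partial derivatives act first, and only then are the accumulated total derivatives released. Your subsequent appeal to the delay convention of Convention~\ref{RemDelay} to ``legitimise'' deferring the horizontal transport is circular, since that convention is nothing but the notational marker $\lceil\ldots\rceil$ for the content of the Proposition itself.

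What actually pins down the prescription in the paper is a consistency argument, not a commutativity argument: the cancellation of similar three\/-\/sink graphs in the associator $(F\mathbin{\star_{\hbar}}G)\mathbin{\star_{\hbar}}H-F\mathbin{\star_{\hbar}}(G\mathbin{\star_{\hbar}}H)$ requires that the analytic expression encoded by a graph not depend on the scenario by which that graph was composed from inner and outer products; this scenario\/-\/independence \emph{forces} the delay of the total derivatives of the inner star\/-\/products until the partial derivatives of the outer ones have finished acting, and the general rule is then imported from~\cite{gvbv,sqs13,prg15,cycle16} (the paper gives no self\/-\/contained proof here). To close your argument, replace the permutability claim by this forcing argument: exhibit the two composition scenarios for a vertex of type~\eqref{FigDriveThrough}, observe that they differ precisely by the relative order of a $\dd/\dd u^i_{\sigma}$ and a $(\pm\Id/\Id\bx)^{\sigma}$, and conclude that equality of the two outcomes is possible only under the stated staging.
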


\begin{convention}\label{RemDelay}
To indicate the delayed arrival of total derivatives to their final places (where we write them at once), let us embrace these operators by using $\lceil\ldots\rceil$ in all formulas (e.\,g., see Definition~\ref{DefVarPBr} on p.~\pageref{DefVarPBr} above).
\end{convention}

\begin{rem}
We emphasize that by the definition of total derivative (see footnote~\ref{FootTD} on p.~\pageref{FootTD}), the derivatives~$\dd/\dd\bby_k(\delta s^i)$ of virtual shifts~$\delta s^i$ for sections~$u^i=s^i(\bx_{\ell})$ reshape, under integration by parts, into the derivatives~$-\dd/\dd\bx_{\ell}(s^i)$ of those sections but they do not affect the 
parity\/-\/odd variables~$\xi_{j,\zeta}$ which parametrise 
the fibres of another bundle. 
Consequently, 
the \textsl{total} derivatives $(+\overrightarrow{\Id}/\Id\bx_{\ell})^{\tau}\circ(-\overrightarrow{\Id}/\Id\bx_{\ell})^{\sigma}$ refer only to the jet space~$J^{\infty}(\pi)$ where 
they act 
on the respective fibre variables~$\bu_{\sigma}$ in a
vertex' content.
\end{rem}

\begin{example}
The first graph in
~\eqref{FigDriveThrough} corresponds to the formula
\begin{equation}\label{EqPartialTotal}
\bigl(f|_{(\bx_1,[\bu])}\bigr)\frac{\overleftarrow{\dd}}{\dd u^{i_1}_{\sigma_1}}
\frac{\overleftarrow{\dd}}{\dd u^{i_2}_{\sigma_2}}{\phantom{\Bigl|}}^{\lceil}
\Bigl(-\frac{\overleftarrow{\Id}}{\Id\bx_1}\Bigr)^{\sigma_1\cup\sigma_2}\circ
\Bigl(+\frac{\overleftarrow{\Id}}{\Id\bx_1}\Bigr)^{\tau_1\cup\tau_2}{\phantom{\Bigl|}}^{\rceil}\ ,
\end{equation}
where the multi\/-\/indexes~$\tau_1$ and~$\tau_2$ arrive from the respective arrow tails.

The second graph in Fig.~\ref{FigDriveThrough} contributes with the expression
\begin{multline*}
\phantom{\Biggl|}^{\lceil}
\Bigl(+\frac{\overleftarrow{\Id}}{\Id\bx_k}\Bigr)^{\tau_1}
\phantom{\Biggl|}^{\rceil}
\frac{\overrightarrow{\dd}}{\dd\xi_{i_1,\tau_1}}
\int\tfrac12\xi_{\alpha}\Biggl\{
\phantom{\Biggl|}^{\lceil}\Bigl(+\frac{\overrightarrow{\Id}}{\Id\bx}\Bigr)^{\tau}
\Bigl(-\frac{\overrightarrow{\Id}}{\Id\bx}\Bigr)^{\sigma}
\phantom{\Biggl|}^{\rceil}
\frac{\overrightarrow{\dd}}{\dd u^j_{\sigma}}
\bigl(P^{\alpha\beta}_{\zeta}{\bigr|}_{(\bx,[\bu])}\bigr)\Biggr\}
\,\xi_{\beta,\zeta}\dvol(\bx)\\
\frac{\overleftarrow{\dd}}{\dd\xi_{i_2,\tau_2}}
\phantom{\Biggl|}^{\lceil}
\Bigl(+\frac{\overrightarrow{\Id}}{\Id\bx_{\ell}}\Bigr)^{\tau_2}
\phantom{\Biggl|}^{\rceil}\ ,
\end{multline*} 
where the multi\/-\/index~$\tau$ arrives from the tail of arrow decorated with~$j$ and where the copies of base~$M^m$ for objects at the heads of arrows that carry~$i_1$ and~$i_2$ are marked using~$k$ and~$\ell$, respectively.\\[2pt]
\centerline{\rule{1in}{0.7pt}}
\end{example}

\noindent%
Summarising, the local portrait of oriented edges around every vertex in a given graph~$\Gamma$ determines the vertex-incoming partial derivatives with respect to variables~$\bu_{\sigma}$, in-coming graded 
partial derivatives with respect to the parity-odd variables~$\bxi_{\tau}$, and (powers of) 
delayed $(\pm1)\times$~total derivatives. 
All these derivations act on the object contained in a vertex at hand, that is, on either a Hamiltonian density or structure constants~$P^{\alpha\beta}_{\zeta}\bigl(\bx,[\bu]\bigr)$ of the variational Poisson bracket.
Note that in both cases, the arguments are referred 
to the geometry of $J^{\infty}(\pi)$, hence those objects are
expressed 
in terms of 
sections of the parity\/-\/even jet bundle~$\pi_\infty\colon J^\infty(\pi)\to M^m$.

Globally, each oriented graph~$\Gamma$ in the Kontsevich summation formula~\eqref{EqUniversalStar} 
encodes 
a singular linear integral operator 
that acts on a local functional 
contained in one of the~sinks.

\subsection{
On the associativity of star\/-\/product
}\label{SecJacobi}

\subsubsection{}\label{SecMoyal}
Let $\bcP=\tfrac12\int\langle\bxi\cdot\left.P_{\tau}\right|_{\bx}(\bxi)\rangle$ be a variational Poisson bi\/-\/vector such that its coefficients~$P^{ij}_{\tau}(\bx)$ do not depend explicitly on the jet fibre variables~$\bu_{\sigma}$ in the bundle~$\pi_\infty$ over~$M^m\ni\bx$. Let $F=\int f(\bx_1,[\bu])\cdot\dvol(\bx_1)$ and
$G=\int g(\bx_2,[\bv])\cdot\dvol(\bx_2)$ be integral functionals referred to two identical copies of the
jet space~$J^{\infty}(\pi)$. A variational generalisation~$F\star G$ of the Moyal\/--\/Gr\"onewold\/--\/Weyl star\/-\/product~$\star$ for~$F$ and~$G$ is the local functional 
which is constructed from~\eqref{EqMoyal} for the variational Poisson bracket $\{\cdot,\cdot\}_{\bcP}$ by using the techniques from~\S\ref{SecElements}. 
The variational Moyal product of two integral functionals is expressed by the formula
\begin{multline}\label{EqVarMoyal}
F\star G = 
\int\Biggl(\left.f\right|_{(\bx_1,[\bu])}
\exp\biggl(\frac{\overleftarrow{\dd}}{\dd u^i_{\sigma}}
{\vphantom{\Bigl(}}^{\lceil}\Bigl(-\frac{\overleftarrow{\Id}}{\Id\bx_1}\Bigr)^{\sigma}
\Bigl(\frac{\overleftarrow{\Id}}{\Id\bx_1}\Bigr)^{\tau\ \rceil}\cdot
%
\frac{\overrightarrow{\dd}}{\dd\xi_{i,\tau}}\Bigl(\frac{\hbar}2\xi_{\alpha}P^{\alpha\beta}_{\lambda}(\bx)
\xi_{\beta,\lambda}\Bigr)\frac{\overleftarrow{\dd}}{\dd\xi_{j,\zeta}}\cdot{}\\
{}\cdot
{\vphantom{\Bigl(}}^{\lceil}\Bigl(\frac{\overrightarrow{\Id}}{\Id\bx_2}\Bigr)^{\zeta}
\Bigl(-\frac{\overrightarrow{\Id}}{\Id\bx_2}\Bigr)^{\chi\ \rceil}
\frac{\overrightarrow{\dd}}{\dd v^j_{\chi}}\biggr)
g{\bigr|}_{(\bx_2,[\bv])}\Biggr)\Biggr|_{\begin{smallmatrix}\bx_1=\bx=\bx_2\\
\left[\bu\right]=\left[\bv\right]
\end{smallmatrix}}
\cdot\dvol(\bx).
\end{multline}
The angular brackets $\lceil\ldots\rceil$ in~\eqref{EqVarMoyal} embrace the total derivatives whose action  --\,in every term of the towered wedge graph expansion of~$\star$~-- antecedes\footnote{\label{FootDelayInMoyal}%
We recall that 
the action of total derivatives contained, e.\,g., 
in $F\mathbin{\star}G$ itself constituting a part of the object 
$(F\mathbin{\star}G)\mathbin{\star}H - F\mathbin{\star}(G\mathbin{\star}H)$
is also delayed until all the partial derivatives would have acted on the densities~$f$,\ $g$, or~$h$.}
the action of partial derivatives with respect to $u^i_{\sigma}$ and~$v^j_{\chi}$.
For instance, the expansion starts as follows:
\begin{multline*}
F\mathbin{\star}G = F\times G
+\frac{\hbar^1}{1!}\{F,G\}_{\bcP}+\frac{\hbar^2}{2!}\int\left(f\bigl|_{(\bx,[\bu])}\right)
\frac{\overleftarrow{\dd}}{\dd u^{i_1}_{\sigma_1}}\frac{\overleftarrow{\dd}}{\dd u^{i_2}_{\sigma_2}}
\left(-\frac{\overleftarrow{\Id}}{\Id\bx}\right)^{\sigma_1\cup\sigma_2}
\left(\frac{\overleftarrow{\Id}}{\Id\bx}\right)^{\tau_1\cup\tau_2}\cdot\\
\cdot\frac{\overrightarrow{\dd}}{\dd\xi_{i_1,\tau_1}}
\left(\tfrac12\xi_{\alpha_1}P^{\alpha_1\beta_1}_{\lambda_1}(\bx)\xi_{\beta_1,\lambda_1}\right)
\frac{\overleftarrow{\dd}}{\dd\xi_{j_1,\zeta_1}}
\cdot\frac{\overrightarrow{\dd}}{\dd\xi_{i_2,\tau_2}}
\left(\tfrac12\xi_{\alpha_2}P^{\alpha_2\beta_2}_{\lambda_2}(\bx)\xi_{\beta_2,\lambda_2}\right)
\frac{\overleftarrow{\dd}}{\dd\xi_{j_2,\zeta_2}}\cdot\\
\cdot\left(\frac{\overrightarrow{\Id}}{\Id\bx}\right)^{\zeta_1\cup\zeta_2}
\left(-\frac{\overrightarrow{\Id}}{\Id\bx}\right)^{\chi_1\cup\chi_2}
\frac{\overrightarrow{\dd}}{\dd u^{j_2}_{\chi_2}}\frac{\overrightarrow{\dd}}{\dd u^{j_1}_{\chi_1}}
\left(g\bigl|_{(\bx,[\bu])}\right)\cdot\dvol(\bx)+\ov{o}(\hbar^2).
\end{multline*}
Let $\widetilde{\bx}=I_{\alpha\beta}\,\bx+\vec{\boldsymbol{\mu}}$, $\widetilde{\bu}=J_{\alpha\beta}\,\bu+\vec{\boldsymbol{\nu}}(\bx)$ be an affine change of variables in~$\pi$ such that the Jacobian matrix~$J$ is locally constant on the intersection~$V_\alpha\cap V_\beta$ of two charts~$V_\alpha$,\ $V_\beta\subseteq M^m$.
Then formula~\eqref{EqVarMoyal} is invariant with respect to such coordinate reparametrisation.
The associativity of~\eqref{EqVarMoyal} is proved in a standard way 
(see the proof of Proposition~\ref{PropMoyalAssoc} on p.~\pageref{PropMoyalAssoc}).
The associator $(F\star G)\star H - F\star(G\star H)$ of three given integral functionals over $J^{\infty}(\pi)$ itself is an \textsl{integral} 
functional whose density is identically zero at all points 
of~$J^{\infty}(\pi)$ over~$M^m$.

\begin{state}
Formula~\eqref{EqVarMoyal} provides the deformation quantisation of first and, via factorisation by using the junior Poisson bracket for the modified system, of second variational Poisson structures for the Drinfel'd\/--\/Sokolov hierarchies. 
\end{state}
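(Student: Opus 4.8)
The plan is to handle the two members of the bi\/-\/Hamiltonian pencil separately: the first structure directly by formula~\eqref{EqVarMoyal}, and the second structure by reducing it --\,through the Miura substitution to the modified system\,-- to an instance of the first case. I take the $\KdV$ hierarchy as the working prototype; the argument for a general simple Lie algebra is the same up to bookkeeping.

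First, recall that the first member of the bi\/-\/Hamiltonian pencil of $\KdV$ is carried by the constant\/-\/coefficient Hamiltonian operator $A_1 = \Id/\Id x$, i.e.\ by the variational bi\/-\/vector $\bcP_1 = \tfrac12\int \xi\,\xi_x\cdot\dvol(x)$ whose coefficients do not depend on the jet variables~$[\bu]$. This is exactly the hypothesis under which formula~\eqref{EqVarMoyal} was derived in~\S\ref{SecMoyal}. Hence the variational Moyal product~$\star$ attached to~$\bcP_1$ is well defined, and its associativity is the one already established for~\eqref{EqVarMoyal} by the \emph{a posteriori} congruence mechanism (cf.\ Proposition~\ref{PropMoyalAssoc}). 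This settles the quantisation of the first variational Poisson structure, and the same argument applies verbatim to any Drinfel'd\/--\/Sokolov hierarchy whose junior structure is constant\/-\/coefficient.

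Second, the second (Magri) structure of $\KdV$ is carried by the field\/-\/dependent operator $A_2 = -\tfrac12\bigl(\Id/\Id x\bigr)^3 + 2u\,\Id/\Id x + u_x$ (up to normalisation), to which~\eqref{EqVarMoyal} does not apply directly and for which associativity may leak at orders~$\bar{o}(\hbar^{\geqslant2})$. The classical fact I would invoke is the Kupershmidt\/--\/Wilson factorisation
\[
A_2 = \mu_* \circ B_1 \circ \mu_*^{\dagger},
\]
where $\mu\colon v\mapsto u = v_x + v^2$ is the Miura map to the modified ($\mKdV$) system, $\mu_*$ is its Fr\'echet derivative (a total differential operator in~$v$), $\mu_*^{\dagger}$ its formal adjoint, and $B_1 = \Id/\Id x$ is the \emph{junior} Hamiltonian operator of the modified system. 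Since $B_1$ is again constant\/-\/coefficient, formula~\eqref{EqVarMoyal} produces an associative variational Moyal product~$\star^{\mKdV}$ on the integral functionals in the modified variable~$v$. The quantisation of the second $\KdV$ structure is then defined as the transport of~$\star^{\mKdV}$ along~$\mu$: one rewrites the $\KdV$ functionals $F$,~$G$ as functionals of~$v$ through~$u=\mu(v)$, multiplies them using~$\star^{\mKdV}$, and re\/-\/expresses the result in~$[\bu]$.

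The main obstacle --\,and the content of the word \emph{factorisation} in the statement\,-- is to verify that this transported product is again associative and carries $\hbar\,\{\cdot,\cdot\}_{\bcP_2}$ as its leading deformation term. The leading\/-\/term check is the classical statement that $\mu$ is a Poisson morphism from $(v,B_1)$ to $(u,A_2)$, which is precisely the displayed factorisation read at the level of brackets. Associativity is the delicate point, because the Miura substitution is a \emph{nonlinear} differential\/-\/polynomial change of variables: pulling back~$\star^{\mKdV}$, one must track --\,through the delayed\/-\/derivative bookkeeping of Convention~\ref{RemDelay} and the on\/-\/the\/-\/diagonal integration by parts of~\S\ref{SecElements}\,-- that the base\/-\/manifold total derivatives generated by~$\mu_*$ still commute through the wedge towers, so that the Baker\/--\/Campbell\/--\/Hausdorff cancellation of Proposition~\ref{PropMoyalAssoc} survives the substitution. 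I expect this to reduce to showing that the associator of the transported product equals the $\mu$-pullback of the identically vanishing associator of~$\star^{\mKdV}$, i.e.\ that the substitution homomorphism intertwines the two associators; the residual work is then to confirm that the field\/-\/dependence of~$\mu_*$ creates no anomalous $\bar{o}(\hbar^{\geqslant2})$ terms --\,exactly the leakage channel flagged earlier in the paper, which here is closed by the constancy of~$B_1$.
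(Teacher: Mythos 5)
Your plan is essentially the paper's own argument: the first Drinfel'd--Sokolov structure is constant\/-\/coefficient, so \eqref{EqVarMoyal} applies verbatim, and the second is handled through the Miura\/-\/type factorisation $A=\overrightarrow{\ell}_{\bw}^{(\bu)}\circ B\circ\overrightarrow{\ell}_{\bw}^{(\bu)\,\dagger}$ of~\eqref{EqQuattro} onto the constant\/-\/coefficient junior operator~$B$ of the modified hierarchy (the paper derives this for all root systems at once from the $2$D~Toda\,/\,De Donder--Weyl construction rather than from the $\mathsf{A}_1$ case alone). The one substantive difference is where your ``residual work'' lives: the paper never transports the product back to the original variables~$[\bw]$. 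It declares the quantisation of the second structure to be the variational Moyal product of the \emph{pull\/-\/backs} $F\bigl[\bw[\bu]\bigr]$, $G\bigl[\bw[\bu]\bigr]$ on the modified jet bundle, so associativity at all orders of~$\hbar$ is literally that of~\eqref{EqVarMoyal} (Proposition~\ref{PropMoyalAssoc}) and nothing needs to be checked about derivatives of the nonlinear substitution commuting through the wedge towers; the leading\/-\/term identification is exactly the induced\/-\/bracket statement encoded by~\eqref{EqQuattro}. Your last step --- re\/-\/expressing $F\star^{\mKdV}G$ as a functional of the original field --- is the only place your route genuinely diverges, and it is also the shakiest: the image of the substitution homomorphism need not be closed under the star\/-\/product, so that inverse transport is not obviously well defined; the paper simply does not attempt it.
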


\begin{example}[root system~$\mathsf{A}_1$]
Consider the Korteweg\/--\/de~Vries equation
\[
w_t=-\tfrac12w_{xxx}+6ww_x=
\left(-\tfrac12 D^3_x+2wD_x+2D_x\circ w\right)\left(\frac{\delta}{\delta w}\int\tfrac12 w^2\Id x\right)
\]
realised by using its second, field\/-\/dependent variational Poisson structure.\footnote{Through the Fourier transform, the Hamiltonian operator~$\hat{A}_2^{\text{KdV}}$ encodes the Virasoro algebra, cf.~\cite{BelavinW}.}
Consider the Miura substitution~\cite{Miura68} $w=\frac12(u_x^2-u_{xx})$; let us explain in advance that the conserved current $w\,\Id x=\frac12(u_x^2-u_{xx})\,\Id x$ stems --\,via the First Noether theorem\,-- from the Noether symmetry $\varphi_1=u_x$ of the action $\cL=\iint\left(\frac12u_xu_y+\frac12e^{2u}\right)\,\Id x\wedge\Id y$ for the Liou\-vil\-le equation $\cE_{\text{Liou}}=\{u_{xy}=\exp(2u)\}$.

The mapping $w=w\bigl(x,[u]\bigr)$ is determined by the \textsl{integral} 
$w\in\ker\left.\frac{\Id}{\Id y}\right|_{\cE_{\Liou}}$; we recall that the coefficient ``2" in the right-hand side of
$u_{xy}=\exp(2u)$ is the only entry of the Cartan matrix $K=\|2\|^1_1$ for Lie algebra $\Sl_2(\BBC)$.

By definition, put $\vartheta=\frac12u_x$ so that $\ell^{(u)}_{\vartheta}=\frac12\frac{\Id}{\Id x}$ is the first
Hamiltonian operator $\hat{B}_1^{\mKdV}$ of modified KdV hierarchy and so that $w=2\vartheta^2-\vartheta_x$.
Denote by $\square=4\vartheta+D_x=2u_x+D_x$ the adjoint $(\ell_w^{(\vartheta)})^{\dagger}$ of linearisation
$\ell_w^{(\vartheta)}=4\vartheta-D_x$. By using the chain rules
$\delta/\delta\vartheta=(\ell_w^{(\vartheta)})^{\dagger}\circ\delta/\delta w$ and
$\delta/\delta u=(\ell_{\vartheta}^{(u)})^{\dagger}\circ(\ell_w^{(\vartheta)})^{\dagger}\circ\delta/\delta w$,
we cast the (potential) modified KdV equations,
\begin{align*}
u_t&=-\tfrac12u_{xxx}+u^3_x=\square(w),\qquad
\vartheta_t=-\tfrac12\vartheta_{xxx}+12\vartheta^2\vartheta_x,\\
\intertext{into their canonical De Donder\/--\/Weyl's 
representation~\cite{DeDonderWeyl}}
u_t&=\frac{\delta H[w[\vartheta]]}{\delta\vartheta},\qquad
\vartheta_t=-\frac{\delta H[w[\vartheta[u]]]}{\delta u}
\quad\text{ with }H=\int\tfrac12w^2\,\Id x.\\
\intertext{Clearly, we then recover the KdV evolution}
w_t&=\bigl(\ell_w^{(\vartheta)}(\vartheta_t)\bigr)[w]=
\left(-\tfrac12 D_x^3+4wD_x+2w_x\right)\left(
{\delta H\bigl(x,[w]\bigr)}/{\delta w}\right).
\end{align*}
This factorisation pattern involving the Fr\'echet derivatives (or \textsl{linearisations}),
\[
\hat{A}_2^{\KdV}=\ell_w^{(\vartheta)}\circ\ell_{\vartheta}^{(u)}\circ\left(\ell_w^{(\vartheta)}\right)^{\dagger},
\]
is common 
to all the root systems of ranks~$r\geqslant1$, that is, for the (modified) Drinfel'd\/--\/Sokolov 
hierarchies~\cite{DSViniti84}.
It is seen that the hierarchy for respective analogue of potential modified KdV equation for $\bu$ constitutes the maximal
commutative subalgebra in the Lie algebra of Noether symmetries for Leznov\/--\/Saveliev's nonperiodic 2D~Toda chains~\cite{LeznovSaveliev} $u_{xy}^i=\exp\Bigl(\sum\limits_{j=1}^r\frac{2\langle\alpha_i,\alpha_j\rangle}{\langle\alpha_j,\alpha_j\rangle}\cdot u^j\Bigr)$. The algorithm for construction of $r$~integrals~\cite{ZhiberSokolov} $w^1$,\ $\dots$,\ $w^r$ is known from~\cite{Shabat95}, see~\cite{Protaras2008} for an illustration. 
The De~Donder\/--\/Weyl formalism~\cite{DeDonderWeyl} furthers the approach:
the variables $\vartheta_1$,\ $\ldots$,\ $\vartheta_r$ are the canonical conjugate \textsl{momenta}, $\vartheta_i=\dd L/\dd u^i_y$,
for the genuine \textsl{coordinates} $u^1$,\ $\ldots$,\ $u^r$ satisfying the 2D~Toda equations.
The Lagrangian density is $L=\tfrac12\kappa_{ij}u^i_xu^j_y+\langle a_i,\exp(K^i_{\mathstrut\,j}u^j)\rangle$,
where each row of the Cartan matrix $K=\|K^i_{\mathstrut\,j}\|$ is symmetrised to
$\kappa=\|a_i\cdot K^i_{\mathstrut\,j}\|_{i=1,\ldots,r}^{j=1,\ldots,r}$
by using the root lengths, $a_i\mathrel{{:}{=}}1/\langle\alpha_i,\alpha_i\rangle$ at every~$i$. Consequently, the junior variational
Poisson structure for the modified Drinfel'd\/--\/Sokolov hierarchy is
\[
\widehat{B}_1=\left\|\frac{\langle\alpha_i,\alpha_j\rangle}{\langle\alpha_i,\alpha_i\rangle\langle\alpha_j,\alpha_j\rangle}\,
\frac{\Id}{\Id x}\right\|_{i=1,\ldots,r}^{j=1,\ldots,r}
\]
for every root system~$\alpha_1,\ldots,\alpha_r$. Its coefficients are constants~$\in\Bbbk$.
\end{example}

Having thus factorised a higher, field\/-\/dependent variational Poisson structure through the junior variational Poisson structures whose coefficients 
do not depend explicitly on the new fields~$\phi\in\Gamma(\pi)$, 
we reduce the large deformation quantisation problem for functionals 
$F[\bw]$,\ $G[\bw]$,\ $H[\bw]\colon\Gamma(\wt{\pi})\to\Bbbk$
to a much smaller 
Moyal\/--\/Gr\"onewold\/--\/Weyl case~\eqref{EqVarMoyal} of the same functionals 
$F\bigl[\bw[\bu]\bigr]$,\ $G\bigl[\bw[\bu]\bigr]$,\ $H\bigl[\bw[\bu]\bigr]\colon\Gamma(\pi)\to\Bbbk$, 
now referred to the jet bundle~$\pi_\infty$ of (potential) modified hierarchies.

Indeed, let~$\pi$ and~$\wt{\pi}$ be two affine bundles over the base~$M^m$.
Consider a jet space morphism $\bw^{(\infty)}\colon$ $J^\infty(\pi)\to J^\infty(\wt{\pi})$
specified by a Miura\/-\/type substitution $\bw=\bw(\bx,[\bu])\colon\Gamma(\pi_{(\infty)})\to\Gamma(\wt{\pi})$ of positive differential order. 
The Hamiltonian differential operators factorise via\footnote{Integrating by parts, 
\begin{multline*}
\tfrac12\int
\Biggl{\langle}(\bxi)
 \Bigl(\overleftarrow{\ell}_{\bw}^{(\bu)}\Bigr)^{\dagger}\cdot
 \Bigl(P_{\tau}{\bigr|}_{(\bx,[\bu])}
   \Bigl(\frac{\Id}{\Id\bx}\Bigr)^{\tau}\circ
\overrightarrow{\ell}_{\bw}^{(\bu)\,\dagger}
 \Bigr)(\bxi)
\Biggr{\rangle}\cong{}\\
{}\cong\tfrac12\int
\Biggl{\langle}\bxi\cdot\Bigl(
\overrightarrow{\ell}_{\bw}^{(\bu)}\circ P_{\tau}{\bigr|}_{(\bx,[\bu])}
\Bigl(\frac{\Id}{\Id\bx}\Bigr)^{\tau} \circ 
\overrightarrow{\ell}_{\bw}^{(\bu)\,\dagger}
\Bigr)(\bxi)\Biggr{\rangle}
=\tfrac12\int\langle\bxi,\overrightarrow{A}(\bxi)\rangle\ ,
\end{multline*}
we construct the Hamiltonian differential operator in total derivatives that takes variational covectors to (the generating sections of) evolutionary vector fields.}
\begin{equation}\label{EqQuattro}
A{\bigl|}_{(\bx,[\bw])}=
\overrightarrow{\ell}_{\bw}^{(\bu)}\circ B{\bigr|}_{(\bx,[\bu])}\circ
\overrightarrow{\ell}_{\bw}^{(\bu)\,\dagger}
,
\end{equation}
where $\{\cdot,\cdot\}_{\frac12\int\langle\boldsymbol{\chi},A(\boldsymbol{\chi})\rangle}$ is the variational Poisson bracket \textsl{induced}\ for functionals
$H[\bw]\colon$ $\Gamma(\wt{\pi})\to\Bbbk$ from a \textsl{given} variational Poisson structure
$\{\cdot,\cdot\}_{\frac12\int\langle\bxi\cdot B(\bxi)\rangle}$ for the 
pull\/-\/backs ${H\bigl[\bw[\bu]\bigr]\colon\Gamma(\pi)\to\Bbbk}$.

Formula~\eqref{EqQuattro} correlates senior Poisson structures
$\{\cdot,\cdot\}_{\frac12\int\langle\boldsymbol{\chi},A_{i+1}(\boldsymbol{\chi})\rangle}$
for multi\/-\/Ha\-mil\-to\-ni\-an hierarchies with the junior Hamiltonian 
operators~$B_i$ for the respective modified hierarchies of 
completely integrable PDE systems (see~\cite{TMF2004,TMF2009} and references therein).
Solutions $\bigl(\pi$,\ $\bw\bigl(\bx,[\bu]\bigr)$,\ $B\bigr)$ are ``good" if the coefficients of differential operator~$B$, referred to~$(\bx,u^j_{\sigma})$,
do not contain the jet variables $u^j_{\sigma}$ explicitly 
(that is, 
the star\/-\/product for the Hamiltonian operator~$B$ amounts to formula~\eqref{EqVarMoyal}). 
In this case the output of deformation quantisation procedure~$\times\mapsto\star$ is associative at all orders of the deformation parameter~$\hbar$. However, for a given Hamiltonian differential operator~$A$ over~$J^{\infty}(\wt{\pi})$, its factorisation problem can be very hard.

\subsubsection{}\label{SecWhyLeaks}
Finally, let us take a generic variational Poisson brackets $\{\cdot,\cdot\}_{\bcP}$ with field\/-\/dependent coefficients~$P^{ij}_{\tau}(\bx,[\bu])$. For instance, suppose that 
factorisation~\eqref{EqQuattro}, reducing a given Hamiltonian differential operator~$\widehat{A}_2$ on~$J^{\infty}(\wt{\pi})$ to the Moyal case for~$\widehat{B}_1$ on~$J^{\infty}(\pi)$, is not yet known.

It is readily seen that the splitting of differential consequences from the Jacobi identity into the separately vanishing homogeneous components, see Proposition~\ref{PropLeibnizGraphZero} in~\S\ref{SecJacVanishVia}, no longer takes place without reservations in the variational setting. This is because not only the vertical derivatives along the fibre of~$J^\infty(\pi)$ work by the Leibniz rule over the five vertices in every Jacobiator~\eqref{EqJacFig} but also do the total derivatives in their trail, as in~\eqref{EqPartialTotal}, 
convert the Jacobiator on its three arguments into an indivisible object. Therefore, in the variational picture only those Kontsevich graph expansions of Leibniz graphs can vanish in which the Jacobiator is not split. Yet we remember from~\cite{sqs15} that this is already not the case at~$\hbar^3$ in the associator for~$\star_\hbar$, see~\S\ref{SecJacVanishVia} above. 

Secondly, for every triple of arguments, the Jacobiator $\Jac_{\bcP}(\cdot,\cdot,\cdot)\cong 0\colon \Gamma(\pi)\to0\in\Bbbk$ is the map which, in terms of~\cite{gvbv,prg15,cycle16}, can be a \textsl{synonym of zero}. Namely, if the density of this cohomologically trivial integral functional is not vanishing over all points of~$M^m$, the local variational polydifferential operator~$\Diamond$ in
\begin{equation}\tag{\ref{EqDefDiamond}}
\Assoc_{\star_\hbar}(F,G,H)=\Diamond\,\bigl(\bcP,\Jac_{\bcP}(\cdot,\cdot,\cdot)\bigr)(F,G,H),
\qquad F,G,H\in\bcA[[\hbar]],
\end{equation}
can produce a nonzero integral functional from its zero\/-\/value argument~$\Jac(\bcP)$. 
Indeed, whenever two or more arrows arrive at a vertex in the argument of~$\Diamond$, 
see~\eqref{FigDriveThrough}, the order in which partial and then total derivatives act is~\eqref{EqPartialTotal}.
Therefore, the mechanism $\delta/\delta\bu\circ\Id/\Id\bx\equiv0$ that guarantees the vanishing of the first variation for a cohomologically trivial argument is stepped over. 

\begin{cor}
In the field\/-\/theoretic setting, the associativity of star\/-\/product~$\star_\hbar$ can start leaking at order~$\hbar^{\geqslant3}$ for a 
variational Poisson structure~$\{\cdot,\cdot\}_{\bcP}$ with field\/-\/dependent coefficients in the leading deformation term, so that $\Assoc_{\star_\hbar}(\cdot,\cdot,\cdot)\doteq\bar{o}(\hbar^{\geqslant 2})$.
\end{cor}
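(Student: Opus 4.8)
The plan is to lift the finite-dimensional factorisation~\eqref{EqDefDiamond} to the variational setting and then to track, order by order in~$\hbar$, whether the cohomological triviality of $\Jac_{\bcP}$ suffices to force the right\/-\/hand side to vanish. First I would argue that the combinatorial identity realising $\Assoc_{\star_\hbar}(F,G,H)$ as the sum of Leibniz graphs $\Diamond\bigl(\bcP,\Jac_{\bcP}\bigr)(F,G,H)$ survives the transfer of \S\ref{SecElements}: the cancellations among Kontsevich graphs that produce the Leibniz\/-\/graph decomposition are purely combinatorial, resting on the ordering of edges and on the Leibniz rule for the vertical derivations $\dd/\dd u^i_\sigma$, $\dd/\dd\xi_{j,\tau}$ that frame the edges of~$\Gamma$. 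Since those vertical derivations are performed before the horizontal ones (the Proposition of \S\ref{SecElements}), the graph\/-\/level identity~\eqref{EqDefDiamond} holds verbatim for the variational star\/-\/product, with each internal vertex now carrying the variational structure~$\bcP$ and the Jacobiator now the variational tri\/-\/vector $\lshad\bcP,\bcP\rshad$.

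Second, I would dispose of the positive direction. At order~$\hbar^2$ Example~\ref{ExFactorH2} gives $\Diamond=\tfrac23\mathbf{1}+\bar{o}(1)$, so $\Assoc_{\star_\hbar}(F,G,H)=\tfrac23\,\Jac_{\bcP}(F,G,H)+\bar{o}(\hbar^2)$; here no arrow lands on the Jacobiator vertex, so $\Jac_{\bcP}$ enters as a genuine integral functional. Because $\bcP$ is variationally Poisson, $\lshad\bcP,\bcP\rshad\cong0$, whence $\Jac_{\bcP}(F,G,H)\colon\Gamma(\pi)\to0\in\Bbbk$ and the associator vanishes in the sense $\doteq\bar{o}(\hbar^2)$. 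This already yields the stated $\Assoc_{\star_\hbar}(\cdot,\cdot,\cdot)\doteq\bar{o}(\hbar^{\geqslant2})$.

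The core of the statement is the leak at $\hbar^{\geqslant3}$, and here I would invoke the two obstructions isolated in \S\ref{SecWhyLeaks}. Starting at $\hbar^3$ the operator~$\Diamond$ has positive differential order with respect to the Jacobiator (order one at $\hbar^3$, and at least two at $\hbar^4$ by Proposition~\ref{PropTwoEdges}), so at least one arrow lands on a vertex of $\lshad\bcP,\bcP\rshad$. In the finite\/-\/dimensional case Proposition~\ref{PropLeibnizGraphZero} still kills such terms: the vertical Leibniz rule splits $\dd_i\bigl(\Jac_\cP\bigr)$ into argument\/-\/redefinitions plus $\bigl(\dd_i\Jac(\cP)\bigr)(\cdot,\cdot,\cdot)$, all of which vanish separately. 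In the variational picture this splitting into separately vanishing homogeneous components breaks down: by the ordering~\eqref{EqPartialTotal} the delayed total derivatives $\Id/\Id\bx$ trailing each edge act as a block on the five vertices of the Jacobiator~\eqref{EqJacFig}, welding it into an indivisible object, so only those Leibniz graphs in which the Jacobiator is \emph{not} split can still vanish. Equivalently, the Euler\/-\/operator annihilation $\delta/\delta\bu\circ\Id/\Id\bx\equiv0$, which would send the cohomologically trivial density of $\lshad\bcP,\bcP\rshad$ to zero, is stepped over precisely when two or more arrows meet at a vertex, see~\eqref{FigDriveThrough}. Consequently $\Diamond\bigl(\bcP,\Jac_{\bcP}\bigr)$ may return a nonzero integral functional from the ``synonym of zero'' $\Jac_{\bcP}\cong0$.

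The main obstacle is to upgrade ``may return nonzero'' to ``does leak for some field\/-\/dependent~$\bcP$'': that is, to exhibit a variational Poisson structure whose $P^{ij}_\tau(\bx,[\bu])$ genuinely depend on the jet variables, together with functionals $F,G,H\in\bcA$, for which the $\hbar^3$ coefficient of $\Assoc_{\star_\hbar}$ is nonzero as a class in $\bar{H}^m(\pi)$. The difficulty is that several split\/-\/Jacobiator Leibniz graphs contribute at $\hbar^3$, and one must verify that their combined density is not a total divergence --\,that is, that the entanglement produced by the delayed total derivatives~\eqref{EqPartialTotal} is not itself undone by a further integration by parts. I expect this to reduce to a concrete computation in the Gel'fand calculus of \S\ref{SecElements}, checking that at least one surviving term becomes a total divergence only after, but not before, the non\/-\/Euler reordering~\eqref{EqPartialTotal}, so that its horizontal cohomology class is genuinely nonzero.
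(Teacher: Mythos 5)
Your proposal is correct and follows essentially the same route as the paper: the paper's own justification of this corollary is exactly the two obstructions you cite from \S\ref{SecWhyLeaks} --- the delayed total derivatives in~\eqref{EqPartialTotal} weld the Jacobiator into an indivisible object so that the Leibniz\/-\/graph vanishing of Proposition~\ref{PropLeibnizGraphZero} survives only for unsplit Jacobiators (which already fails at~$\hbar^3$ by~\cite{sqs15}), and the mechanism $\delta/\delta\bu\circ\Id/\Id\bx\equiv0$ is stepped over so that $\Diamond$ may return a nonzero functional from the synonym of zero $\Jac_{\bcP}\cong0$. The explicit counterexample you flag as the remaining obstacle is not supplied by the paper either; the corollary is a possibility statement (``can start leaking''), and the paper, like you, stops at exhibiting the channel.
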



\subsubsection*{Acknowledgements}
The author is grateful to 
M.\,Kon\-tse\-vich for advice and R.\,Buring for discussion.
The author also thanks the organizers of 50th~Sophus Lie Seminar (26--30~September 2016 in Bedlewo, Poland)
for hospitality. 
This research was supported in part by JBI~RUG project~106552 (Groningen
) and by the $\smash{\text{IH\'ES}}$ and MPIM (Bonn), to which the author is grateful for their warm hospitality.

\end{document}